\theoremstyle{plain}
\newtheorem{theorem}{Theorem}[section]
\newtheorem{proposition}[theorem]{Proposition}
\newtheorem{lemma}[theorem]{Lemma}
\newtheorem{corollary}[theorem]{Corollary}
\theoremstyle{definition}
\newtheorem{definition}[theorem]{Definition}
\newtheorem{remark}[theorem]{Remark}
\newtheorem{example}[theorem]{Example}
\theoremstyle{remark}
{%
\end{oldthebibliography}%
}
\newcommand{\eps}{\varepsilon}
\newcommand{\N}{\mathbb{N}}
\newcommand{\R}{\mathbb{R}}
\newcommand{\cC}{\mathcal{C}}
\newcommand{\cP}{\mathcal{P}}
\DeclareMathOperator{\proj}{proj}
\DeclareMathOperator{\spt}{spt}
\DeclareMathOperator{\inv}{inv}
\DeclareMathOperator*{\argmin}{arg\, min}
\newcommand{\1}{\mathbf{1}}
\newcommand{\br}[1]{\langle #1 \rangle}
\newcommand{\mykill}[1]{}
\newcommand{\eins}{\1}%
\newcommand{\OT}{\mathsf{OT}}
\numberwithin{equation}{section}
\begin{document}

\title{\vspace{-2em}
  Convergence Rates for Regularized Optimal Transport via Quantization\thanks{The authors thank Guillaume Carlier, 
  L\'{e}na\"{\i}c Chizat, Nicolas Juillet, Harald Luschgy, Jon Niles-Weed, Gilles Pag\`es
   and Luca Tamanini
  for discussions and encouragement.}%
 }
\date{\today}

\author{
  Stephan Eckstein%
  \thanks{Department of Mathematics, ETH Zurich, seckstein@ethz.ch. 
  }
  \and
  Marcel Nutz%
  \thanks{
  Departments of Statistics and Mathematics, Columbia University, mnutz@columbia.edu. %
  Research supported by an Alfred P.\ Sloan Fellowship and NSF Grants DMS-1812661, DMS-2106056.}
  }
\maketitle \vspace{-1.2em}

\begin{abstract}
We study the convergence of divergence-regularized optimal transport as the regularization parameter vanishes. Sharp rates for general divergences including relative entropy or $L^{p}$ regularization, general transport costs and multi-marginal problems are obtained. A novel methodology using quantization and martingale couplings is suitable for non-compact marginals and achieves, in particular, the sharp leading-order term of entropically regularized 2-Wasserstein distance for marginals with finite $(2+\delta)$-moment.
\end{abstract}

\vspace{.3em}

{\small
\noindent \emph{Keywords} Entropic Optimal Transport; $f$-Divergence; Regularization; Quantization

\noindent \emph{AMS 2010 Subject Classification}
90C25; %
49N05 %
}
\vspace{.6em}

\section{Introduction}

We study regularized optimal transport problems of the form
\begin{align*}
\OT_{f, \varepsilon} := \inf_{\pi \in \Pi(\mu_1, \dots, \mu_N)} \int c \,d\pi + \varepsilon D_f(\pi, \mu_{1}\otimes \cdots\otimes\mu_{N})
\end{align*}
where $D_{f}$ is an $f$-divergence, for example relative entropy (Kullback--Leibler divergence) or $L^{p}$ regularization. %
(Notation is detailed in \cref{se:prelims}.) Note that $\eps=0$ yields the classical optimal transport problem~$\OT$ without regularization. We are interested in the speed of convergence $\OT_{f, \varepsilon}\to\OT$ as the regularization parameter~$\eps$ tends to zero, especially its dependence on the marginals~$\mu_{i}$ and the divergence~$D_{f}$. 

Regularized optimal transport has attracted a great deal of research in recent years, chiefly because regularization enables the use of efficient numerical algorithms (e.g, \cite{BlanchetJambulapatiKentSidford.18,Cuturi.13,LinHoJordan.19,CuturiPeyre.19} and the references therein) to approximate~$\OT$ in high-dimensional applications---whence the interest in the speed of convergence. The most important divergence is relative entropy which gives rise to Sinkhorn's algorithm (or IPFP); here $\OT_{f, \varepsilon}$ is often called the \emph{entropic optimal transport} problem (e.g., \cite{Nutz.20,CuturiPeyre.19}). Other divergences, especially $L^{p}$ regularization, are being used in applications where sparse optimizers are desired or weak penalization (small $\eps$) causes numerical instabilities with entropic regularization~\cite{blondel18quadratic,DiMarinoGerolin.20b,EssidSolomon.18,LorenzMannsMeyer.21,TerjekGonzalez.22}.
For multi-marginal transport and the related Wasserstein barycenters, see for instance~\cite{AguehCarlier.11, BenamouCarlierNenna.19, Carlier.21, CarlierEichingerKroshnin.20, CarlierLaborde.20}. Literature more specific to the convergence $\OT_{f, \varepsilon}\to\OT$ is discussed below.

In this paper, we propose a novel methodology to estimate $\OT_{f, \varepsilon} - \OT$ based on quantization. It is simultaneously more general and, arguably, easier than previous arguments, allowing us to obtain convergence rates for a wide class of $f$-divergences, unbounded cost functions and multi-marginal problems in a unified manner---the methodology may be as important as the results themselves. Even for entropic optimal transport with two marginals and quadratic cost, we substantially improve on the existing results, by allowing for arbitrary marginals with finite $(2+\delta)$-moments where previous techniques required compact supports and uniformly bounded densities \cite{CarlierPegonTamanini.22,Chizat2020Faster,ConfortiTamanini.19,Pal.19}.

To give an informal preview, let us focus on $N=2$ marginals with entropic or $L^{p}$ regularization ($p>1$) for simplicity. In those examples, we obtain non-asymptotic bounds of the form\begin{align*}
	\OT_{f, \varepsilon} - \OT &\leq \beta \varepsilon \log\left(\frac{1}{\varepsilon}\right) + K \varepsilon  &&\mbox{for entropic regularization}, \\
	\OT_{f, \varepsilon} - \OT &\leq K\eps^{\frac{1}{(p-1)\beta+1}}  &&\mbox{for $L^{p}$ regularization},
\end{align*}
where $\beta$ reflects a certain quantization dimension. In our first result (\cref{thm:lip}), $\beta$ encodes the optimal quantization rate for one of the marginals---if $\mu_{i}$ are measures on $\R^{d_{i}}$, this leads to $\beta\leq d_{1}\wedge d_{2}$. In this result, we assume that the integrated cost $\pi\mapsto \int c\, d\pi$ is Lipschitz when restricted to a certain set of couplings; this is satisfied for Lipschitz functions~$c$ but also, e.g., for $|x-y|^{p}$ with $p\geq1$ on $\R^{d}\times\R^{d}$. The stated estimates are sharp in certain examples (see Section~\ref{se:sharpness}), up to the constant~$K$.

The key idea is to use so-called \emph{shadows} to transfer explicit divergence bounds for discrete measures into continuous couplings with controlled divergence, while also bounding the Wasserstein distance. As quantization theory has long studied how fast general measures can be approximated with discrete ones, this enables us to control both the transport and divergence terms in~$\OT_{f, \varepsilon}$. Specifically, a rate is found by choosing the number of points for the quantization of the marginals relative to the regularization parameter~$\eps$ such as to balance the transport and divergence terms. At a high level, the shadow construction is a substitute for the widely used \emph{block approximation} method first introduced in~\cite{CarlierDuvalPeyreSchmitzer.17}. Employing quantization and Wasserstein geodesics instead of building blocks explicitly, our construction fully exploits the flexibility of the $p$-Wasserstein distance, making it very suitable for unbounded domains and costs. 

Our main result (\cref{thm:second}) pertains to cost functions on $\R^{d}\times \cdots\times\R^{d}$ admitting a bounded second derivative, in particular the quadratic cost, and improves the value of $\beta$ to $d/2$ under sufficient regularity. Here, smoothness leads to the factor $1/2$ while $d$ reflects the quantization rate for an optimal transport plan (of the unregularized problem~$\OT$) rather than the marginals.
The key idea is a martingale argument that seems to be novel: the martingale property of 2-Wasserstein quantization can be used to eliminate the first-order term in the integrated Taylor expansion of the cost function. The remaining leading term is then of second order, whence the factor~$1/2$. Once again, the martingale methodology lends itself to the unbounded setting; moreover, the rates are sharp in wide class of examples.
In particular, we establish the leading-order term $\frac{d}{2} \varepsilon \log\left(\frac{1}{\varepsilon}\right)$ for entropically regularized 2-Wasserstein distance whenever the marginals have finite moments of order $2+\delta$ for some $\delta>0$ (\cref{co:quadratic}). In its proof, Minty's trick~\cite{Minty.62} is used to establish the quantization rate for an optimal transport plan.

For discrete problems, the study of entropic regularization and its convergence goes back to~\cite{CominettiSanMartin.94}; see also~\cite{Weed.18} for a non-asymptotic result, \cite{AltschulerNilesWeedStromme.21} for a semi-discrete problem, and \cite{AltschulerBoixAdsera.22} for multi-marginal transport. Here, we are mainly interested in continuous problems. As $\OT_{f, \varepsilon} - \OT = O(\eps)$ if and only if there exists an optimal transport with finite divergence (\cref{pr:finiteDivergenceOT}) and as the latter typically fails for continuous marginals, we shall be dealing with convergence slower than~$O(\eps)$. In the continuous case, we are not aware of works addressing the multi-marginal problem, and for two marginals, almost all results are on the entropic regularization; an exception is~\cite{Bianco.21} where $\chi^{2}$ divergence is studied in a compact setting and an upper bound of order $\eps^{1/(d+1)}$ is found. Returning to the entropic case, 
the link between $\OT_{f, \varepsilon}$ and $\OT$ goes back to~\cite{Mikami.02, Mikami.04} in the Schr\"odinger bridge problem (which is closely related to entropic optimal transport with quadratic cost; cf.~\cite{Leonard.14}). Gamma-convergence was shown in~\cite{Leonard.12}; see also~\cite{CarlierDuvalPeyreSchmitzer.17} for a proof in a setting closer to ours. A stochastic control viewpoint is presented in~\cite{ChenGeorgiouPavon.16}. Early quantitative results for quadratic cost, from a large deviations viewpoint, are \cite{AdamsDirrPeletierZimmer.11,DuongLaschosRenger.13,ErbarMaasRenger.15}, 
later extended in~\cite{Pal.19} to cost functions closely modeled on the quadratic. While these are first-order results, a second-order expansion of the optimal cost was obtained in \cite{ConfortiTamanini.19} for the Schr\"odinger bridge setting and in \cite{Chizat2020Faster} for entropic optimal transport, all with quadratic cost. These results require strong regularity assumptions in addition to compactly supported marginals.

The most comparable results by far were obtained in the very recent (and partly concurrent) work~\cite{CarlierPegonTamanini.22} which addresses general cost functions and obtains rates similar to ours, at least for compactly supported marginals, in the case of entropic regularization with two marginals. Remarkably, the methods used are quite different. For Lipschitz cost functions and compactly supported marginals, \cite[Proposition~3.1]{CarlierPegonTamanini.22} finds that $\OT_{f, \varepsilon} - \OT \leq d \varepsilon \log(1/\eps) + O(\varepsilon)$ where $d$ is the minimum of the two marginal dimensions. A potentially more general result is obtained with a notion of upper R\'enyi dimension of the marginals, however a more concrete bound is only available through the box dimension which requires compactness to be finite.\footnote{Note added in proof: This statement referred to the preprint version of~\cite{CarlierPegonTamanini.22}. The final published version provides an improved result; namely, the upper R\'enyi dimension is bounded by the Euclidean dimension as soon as the marginal has a finite logarithmic moment.} The proof proceeds through a block approximation, applying the Lipschitz property on each block. Our \cref{thm:lip} (specialized to the entropic divergence on two marginals) obtains a bound of the same form but with the dimension defined by quantization. Using $p$-Wasserstein distance with finite~$p$, quantization is well-behaved also for unbounded domains, so that the bound can be established for general marginals with finite $(p+\delta)$-moments. Moreover, \Cref{thm:lip} applies to costs like~$|x-y|^{p}$, $p\geq1$ as the Lipschitz property is only required in an integrated form. Shadows are a convenient and robust tool in this context, as is also exemplified by their application to adapted (causal) optimal transport in~\cite{EcksteinPammer.22}.

For  cost functions of class $C^{1,1}$ (thus with a.e.\ bounded second derivative) and compactly supported marginals with uniformly bounded Lebesgue densities, \cite[Proposition~3.4]{CarlierPegonTamanini.22}  shows that $\OT_{f, \varepsilon} - \OT \leq \frac{d}{2} \varepsilon \log\left(\frac{1}{\varepsilon}\right) + O(\varepsilon)$. The proof is deep and based on the fine regularity of the Kantorovich potential; namely, a quadratic bound on the integrated difference between a $\lambda$-convex function and its first-order Taylor expansion \cite[Lemma~3.6]{CarlierPegonTamanini.22}. This bound depends directly on the diameter of the domain and the density assumption is needed to pass from Lebesgue measure to the actual marginals. By contrast, the martingale argument used for our \cref{thm:second} applies to unbounded domains and is fairly robust; for instance, it easily extends to the multi-marginal case. It does, however, take as its input the quantization rate of an optimal transport plan~$\pi^{*}$, so that it needs to be applied together with a regularity result for~$\pi^{*}$. For quadratic cost, we prove that the rate is indeed $1/d$ in great generality, assuming only finite moments of order $2+\delta$. For compactly supported marginals, a quite generic sufficient condition for this rate is the nondegeneracy of the cost; i.e., invertibility of the mixed derivative $D^{2}_{xy}c(x,y)$. For unbounded but sufficiently integrable marginals, we show a rate arbitrarily close to $1/d$ if nondegeneracy holds in a uniform sense. 

In~\cite{CarlierPegonTamanini.22}, the authors also obtain a matching lower bound for the convergence rate (for entropic regularization), for cost functions satisfying the aforementioned nondegeneracy condition and sufficiently regular marginals. The proof is again based on a fine analysis of the Kantorovich potential. The key tool is a quadratic detachment estimate \cite[Lemma~4.2]{CarlierPegonTamanini.22} which we reuse in \cref{se:sharpness} to obtain matching lower bounds for $L^{p}$ regularization as well.

While the present work focuses on the convergence of the optimal cost $\OT_{f, \varepsilon}$, two related question are the convergence of the optimal couplings and optimal dual potentials. See ~\cite{BerntonGhosalNutz.21,CarlierDuvalPeyreSchmitzer.17,Leonard.12, Leonard.14}  and~\cite{Berman.20,ChiariniConfortiGrecoTamanini.22,GigliTamanini.21,NutzWiesel.21,PooladianWeed.21}, respectively, and the references therein. As seen in~\cite{BerntonGhosalNutz.21,ChiariniConfortiGrecoTamanini.22}, the convergence is also related  to the stability of $\OT_{f, \varepsilon}$ wrt.\ the marginals \cite{CarlierLaborde.20, EcksteinNutz.21, GhosalNutzBernton.21b, NutzWiesel.22}.

The remainder of this paper is organized as follows. \Cref{se:prelims} formally introduces the problem and notation, then gathers preliminaries on quantization, divergence bounds for discrete couplings, and shadows. \Cref{se:main} contains the main results on convergence rates. \Cref{se:sharpness} provides instances where the rates are sharp and Appendix~\ref{se:appendix} gathers two additional results.

\section{Preliminaries}\label{se:prelims}

\subsection{Setting and Notation}

Let $(Y,d_{Y})$ be a Polish space and $\mathcal{P}(Y)$ its set of Borel probability measures. Fix $p\in[1,\infty)$ and denote by $\mathcal{P}_p(Y)$ the subset of measures $\mu$ with finite $p$-th moment; i.e., $\int d_{Y}(x,\hat{x})^p \,\mu(dx) < \infty$ for some (and then all) $\hat{x} \in Y$. The $p$-Wasserstein distance $W_p(\mu, \nu)$ between $\mu,\nu \in \mathcal{P}_p(Y)$ is defined via
\begin{align*}
W_p(\mu, \nu)^p &= \inf_{\pi \in \Pi(\mu, \nu)} \int d_{Y}(x, y)^p \,\pi(dx, dy).
\end{align*}
Fix $N\in \N$ and let $(X_i, d_{X_i})$, $i=1, \dots, N$ be Polish probability spaces with measures $\mu_{i}\in\cP(X_{i})$. We denote by $X = \prod_{i=1}^N X_i$ the product space and use the particular product metric 
$%
d_{X, p}(x, y):=
(\sum_{i=1}^N d_{X_i}(x_i, y_i)^p)^{1/p} %
$ %
to induce the $p$-Wasserstein distance on~$X$.

Let $c: X\to\R$ be continuous with growth of order $p$; i.e.,
\begin{equation*}
|c(x)|\leq C \big(1+d_{X, p}(x,\hat{x})^p\big)
\end{equation*}
for some $C\geq0$ and $\hat{x} \in X$. The optimal transport problem is 
\begin{align*}
\OT &:= \inf_{\pi \in \Pi(\mu_1, \dots, \mu_N)} \int c \,d\pi,
\end{align*}
where $\Pi(\mu_1, \dots, \mu_N)\subset \cP_{p}(X)$ denotes the set of couplings of the marginal measures~$\mu_{i}\in\cP_{p}(X_{i})$. The growth of~$c$ ensures that $\OT$ is finite.

Let $f : \mathbb{R}_+ \rightarrow \mathbb{R}$ be a strictly convex, lower bounded function  with $f(1) = 0$ and $\lim_{x \rightarrow \infty} f(x)/x = \infty$. The \emph{$f$-divergence} $D_f(\mu,\nu)$ between probabilities $\mu,\nu$ on a common space is defined by
\[
D_f(\mu, \nu) := \int f\left(\frac{d\mu}{d\nu}\right) \,d\nu \quad\mbox{for}\quad \mu\ll \nu
\]
and $D_f(\mu, \nu) :=\infty$ for $\mu\not\ll\nu$. The $D_{f}$-regularized  transport problem is
\begin{align*}
\OT_{f, \varepsilon} &:= \inf_{\pi \in \Pi(\mu_1, \dots, \mu_N)} \int c \,d\pi + \varepsilon D_f(\pi, P), \quad P:= \mu_{1}\otimes \cdots\otimes\mu_{N},
\end{align*}
where $\eps>0$ is the regularization parameter. In particular, entropic optimal transport corresponds to $f(x)=x\log(x)$.

\subsection{Quantization}

On a Polish space $Y$, we denote by $\mathcal{P}^n(Y) \subset \mathcal{P}(Y)$ the set of probability measures supported on at most $n$ points. Given $p \in [1, \infty)$ and $\mu \in \mathcal{P}_p(Y)$, our results depend on an approximation rate of the form
	\begin{equation}\label{eq:quant}
	\exists\, \mu^n \in \mathcal{P}^n(Y): \quad W_p(\mu^n, \mu) \leq C n^{-\alpha}, \quad n\geq1\tag{${\sf quant}_{p}(C,\alpha)$}
	\end{equation}
  for constants $C\geq0$ and $\alpha>0$. The takeaway of the following is that if the support of $\mu$ is $d$-dimensional, this property typically holds with $\alpha=1/d$.

\begin{remark}[Quantization rate on~$\R^{d}$]\label{rk:quantizationRate}
  Let $Y=\R^{d}$. If $\mu \in \mathcal{P}_{p+\delta}(Y)$ for some $\delta>0$, then~${\sf quant}_{p}(C,\alpha)$ holds with $\alpha=1/d$ for some $C\geq0$. More precisely, \cite[Theorem~6.2]{GrafLuschgy.00} shows that the exact asymptotic constant 
  $$
    C_{a}:=\lim_{n\to\infty} n^{1/d} \inf_{\mu^n \in \mathcal{P}^n(\R^{d})} W_p(\mu^n, \mu)
  $$
  can be expressed through a dimensional constant related to the $p$-quantization of the uniform measure on the unit cube and a moment of the density of the absolutely continuous part of $\mu$. In particular, $C_{a}>0$ as soon as $\mu$ is not mutually singular wrt.\ Lebesgue measure, showing that the rate $\alpha=1/d$ is then optimal. A bound for the (non-asymptotic) constant~$C$ in~${\sf quant}_{p}(C,\alpha)$ is given in \cite[Corollary~6.7]{GrafLuschgy.00}; its proof yields an explicit constant valid for all $n\geq 1$
  depending only on $p,\delta,d$ and $\int |x|^{p+\delta} \,\mu(dx)$.\footnote{The result in \cite[Corollary 6.7]{GrafLuschgy.00} is stated for all $n\geq C_{3}$ instead of $n\geq1$, for a certain constant $C_{3}$, in order to have a statement whose constants do not depend on the moment $\int |x|^{p+\delta} \,\mu(dx)$.  For our purposes, we do not mind such a dependence, and we can easily deduce a result valid for all $n\geq1$ by adjusting the constants.
}

\end{remark}  

For some variations of our results (in fact, only in the multi-marginal case of \cref{thm:lip} with non-entropic divergence), we use a slightly stronger notion, sometimes called (deterministic) empirical quantization, where the approximating measures are required to be uniform. Let $\mathcal{P}^{n, em}(Y)\subset \mathcal{P}(Y)$ be the set of uniform measures on $n$ points; i.e., measures $\mu^{n}=n^{-1}\sum_{i=1}^{n} \delta_{y_{i}}$ for some $y_{i}\in Y$. Similarly as above, we introduce
\begin{equation}\label{eq:empquant}
	\exists\, \mu^n \in \mathcal{P}^{n, em}(Y): \quad W_p(\mu^n, \mu) \leq C n^{-\alpha}, \quad n\geq1\tag{${\sf quant}_{p}^{em}(C,\alpha)$}
	\end{equation}
  for constants $C\geq0$ and $\alpha>0$. This condition clearly implies~${\sf quant}_{p}(C,\alpha)$; but at least in the high-dimensional regime, the optimal rate is in fact the same, as summarized in the following remark.
  
\begin{remark}[Empirical quantization rate on~$\R^{d}$]\label{rk:empiricalQuantizationRate}
  Let $Y=\R^{d}$. The well known \cite[Theorem~1]{FournierGuillin.15} shows, among other things, that if $\mu \in \mathcal{P}_{2p+\delta}(\mathbb{R}^d)$ with $d > 2p$, then~${\sf quant}_{p}^{em}(C,\alpha)$ holds with $\alpha = 1/d$ and a constant $C$ depending only on~$d,p,\delta$ and the $(2p+\delta)$-moment of $\mu$. In particular, this bound for the empirical rate coincides with the bound $1/d$ given for (arbitrary) quantization in Remark \ref{rk:quantizationRate}. Rates for other regimes ($d \leq 2p$) are also obtained in \cite[Theorem~1]{FournierGuillin.15}. Notably, the rates derived in \cite{FournierGuillin.15} are not based on a deterministic construction of $\mu^n$, but hold a.s.\ when $\mu^{n}$ are i.i.d.\ samples of~$\mu$. More precise constants for this result, and non-asymptotic bounds, can be found in the very recent work \cite{Fournier.22}. Rates for i.i.d.\ samples of measures supported on compact submanifolds are studied in \cite{WeedBach.19}.
  
  For measures with bounded support, a deterministic construction in \cite[Theorem~3]{Chevallier.18} provides the rate $\alpha=1/d$ and an explicit constant $C$ for $p<d$; for $p=d$, a logarithmic correction is added, whereas for $p>d$, the rate is at least $\alpha=1/p$. For unbounded measures, \cite[Corollary~1]{Chevallier.18} shows a slightly looser bound for the rate under the condition $\mu \in \mathcal{P}_{p+\delta}(Y)$. The univariate case $d=1$ has been studied in detail~\cite{BencheikhJourdain.22,XuBerger.19}. Here the optimal rate is $\alpha=1$ if $\mu$ has a positive density on its support and is sufficiently integrable, whereas $\alpha<1$ is known in several other cases (see~\cite[Table~1]{BencheikhJourdain.22} for an  overview). 
\end{remark}  

\subsection{Elementary Divergence Bounds}\label{se:divBounds}

For our purposes, discrete measures are useful because they admit straightforward divergence bounds. The best-known example is that a coupling $\pi\in\Pi(\mu_{1},\mu_{2})$ of marginals $\mu_{i}$ supported on~$n$ points has relative entropy $D_{f}(\pi,\mu_{1}\otimes\mu_{2})\leq \log n$. The following lemma collects some extensions of that fact for later reference. We recall that $\mathcal{P}^{n}(X_i)$ denotes the probabilities supported on at most~$n$ points, $\mathcal{P}^{n, em}(X_i)$ the  empirical measures on~$n$ points, and $P= \mu_{1}\otimes \cdots\otimes\mu_{N}$.

\begin{lemma}[Divergence bounds]
	\label{lem:boundfdiv}
	Let $\pi \in \Pi(\mu_1, \dots, \mu_N)$ and define~$\varphi$ by $f(x) = x \varphi(x)$.  Assume that $\varphi$ is nondecreasing.
	\begin{itemize}
		\item[(i)] If $N=2$, $\varphi$ is concave, and $\mu_2 \in \mathcal{P}^{n_2}(X_2)$, then
		$
		D_f(\pi, P) \leq \varphi(n_2).
		$
		\item[(ii)] If $\mu_i \in \mathcal{P}^{n_i, em}(X_i)$ for $i=2, \dots, N$, then
		$
		D_f(\pi, P) \leq \varphi \big(\prod_{i=2}^N n_i \big).
		$
		\item[(iii)] If $\varphi(x) = \log(x)$ and $\mu_i \in \mathcal{P}^{n_i}(X_i)$ for $i=2, \dots, N$, then
		$
		D_f(\pi, P) \leq \sum_{i=2}^N \log(n_i).
		$
	\end{itemize}

	\begin{proof}
	Denote by $\pi_{2:N}$ the marginal of~$\pi$ on $X_2 \times \dots \times X_N$. In particular, $P_{2:N} = \mu_2 \otimes \dots \otimes \mu_N$. We similarly define $\pi_{1:N-1}$ and $P_{1:N-1}$ as the marginals on $X_1 \times \dots \times X_{N-1}$. Let $\sigma$ be the counting measure on the (finite) support of~$P_{2:n}$.
    Disintegrating $\pi=\mu_{1}\otimes K$, we then have
    $
    \frac{d\pi}{dP}=\frac{dK}{dP_{2:n}}\leq \frac{d\sigma}{dP_{2:n}},
    $
     hence
    \begin{align*}%
      D_f(\pi, P) = \int \varphi\left(\frac{d\pi}{dP}\right) \,d\pi \leq \int \varphi\left(\frac{d\sigma}{dP_{2:N}}\right) \,d\pi. 
    \end{align*} 
		In the case~(i) where $N=2$, Jensen's inequality yields
		\[
		\int \varphi\left(\frac{d\sigma}{dP_{2:N}}\right) \,d\pi =  \int \varphi\left(\frac{d\sigma}{d\mu_2}\right) \,d\mu_2 \leq \varphi(n_2).
		\]
		Whereas in~(ii), $\frac{d\sigma}{dP_{2:N}}$ is constant and thus
		$
		  \int \varphi\big(\frac{d\sigma}{dP_{2:N}}\big) \,d\pi = \varphi\big(\prod_{i=2}^N n_i\big).
		$
		To see~(iii), we write $\frac{d\pi}{dP} = \frac{d\pi}{d(\pi_{1:N-1} \otimes \mu_N)} \frac{d(\pi_{1:N-1} \otimes \mu_N)}{dP}= \frac{d\pi}{d(\pi_{1:N-1} \otimes \mu_N)} \frac{d(\pi_{1:N-1})}{dP_{1:N-1}}$.
		As $\varphi(x) = \log(x)$, this yields
		\[
		D_f(\pi, P) = D_f(\pi, \pi_{1:N-1}\otimes \mu_N) + D_f(\pi_{1:N-1}, P_{1:N-1}).
		\]
		To bound the first term, we apply~(i) with $\mu_N$ as second marginal,
		\[
		D_f(\pi, P) \leq \log(n_N) + D_f(\pi_{1:N-1}, P_{1:N-1}).
		\]
		Iterating this argument yields
		$
		D_f(\pi, P) \leq \sum_{i=2}^N \log(n_i)
		$, which was the claim.
	\end{proof}
\end{lemma}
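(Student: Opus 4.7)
The plan is to exploit the finiteness of the supports of $\mu_2, \ldots, \mu_N$ in order to dominate the Radon--Nikodym density $d\pi/dP$ pointwise by a ``counting'' density that depends only on $(x_2, \ldots, x_N)$. Concretely, I would disintegrate $\pi = \mu_1 \otimes K$, where $K(x_1, \cdot)$ is a probability kernel into $X_2 \times \cdots \times X_N$ supported on the finite support $S$ of $P_{2:N} := \mu_2 \otimes \cdots \otimes \mu_N$. Letting $\sigma$ denote the counting measure on $S$, the trivial estimate $K(x_1, \{y\}) \leq 1 = \sigma(\{y\})$ for $y \in S$ yields
$$
\frac{d\pi}{dP}(x_1, \ldots, x_N) \;=\; \frac{dK(x_1, \cdot)}{dP_{2:N}}(x_2, \ldots, x_N) \;\leq\; \frac{d\sigma}{dP_{2:N}}(x_2, \ldots, x_N).
$$
Using the assumed monotonicity of $\varphi$, this gives the unified estimate
$$
D_f(\pi, P) \;=\; \int \varphi\!\left(\tfrac{d\pi}{dP}\right) d\pi \;\leq\; \int \varphi\!\left(\tfrac{d\sigma}{dP_{2:N}}\right) d\pi,
$$
and the task reduces to bounding the explicit right-hand side in each scenario.

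For part (i) the integrand depends only on $x_2$, so the $\pi$-integral collapses to $\int \varphi(1/\mu_2(\{x_2\}))\,\mu_2(dx_2)$; concavity of $\varphi$ and Jensen's inequality then bound this by $\varphi(n_2)$, since $\int 1/\mu_2(\{x_2\})\,\mu_2(dx_2)$ just counts the atoms. For part (ii), uniformity of each $\mu_i$ makes $d\sigma/dP_{2:N}$ equal to the constant $\prod_{i=2}^N n_i$ on $S$, so the bound follows directly, without requiring concavity. For part (iii), the choice $\varphi = \log$ turns the divergence additive: factoring
$$
\frac{d\pi}{dP} \;=\; \frac{d\pi}{d(\pi_{1:N-1}\otimes \mu_N)} \cdot \frac{d\pi_{1:N-1}}{dP_{1:N-1}}
$$
and taking $\log$ yields the chain rule
$$
D_f(\pi, P) \;=\; D_f(\pi, \pi_{1:N-1} \otimes \mu_N) + D_f(\pi_{1:N-1}, P_{1:N-1}).
$$
Applying (i) with $\mu_N$ in the role of ``second marginal'' bounds the first summand by $\log(n_N)$, and iterating on the second telescopes to $\sum_{i=2}^N \log(n_i)$.

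The only mildly delicate point is justifying the domination $d\pi/dP \leq d\sigma/dP_{2:N}$ cleanly: one must observe that when $\mu_2, \ldots, \mu_N$ are all atomic, the kernel $K(x_1, \cdot)$ is automatically absolutely continuous with respect to $P_{2:N}$, with atomic density at most $1/P_{2:N}(\{y\})$ at each $y \in S$. Beyond this small bookkeeping step I foresee no real obstacle, as the three assertions follow from Jensen's inequality, a direct evaluation, and the chain rule for relative entropy, respectively---each matched to the progressively weaker hypothesis imposed on $\varphi$.
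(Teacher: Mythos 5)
Your proposal is correct and follows essentially the same route as the paper's proof: the same disintegration $\pi=\mu_1\otimes K$, the same domination of $d\pi/dP$ by $d\sigma/dP_{2:N}$ via the counting measure on the finite support, Jensen's inequality for (i), direct evaluation of the constant density for (ii), and the entropy chain rule with iteration of (i) for (iii). The "delicate point" you flag is handled the same way in the paper, so there is nothing to add.
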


\subsection{Shadows}\label{se:shadows}

Given $\pi \in \Pi(\mu_1, \dots, \mu_N)$, the shadow $\tilde\pi$ of~$\pi$ on another vector $(\tilde\mu_1, \dots, \tilde\mu_N)$ of marginals is a particular $W_{p}$-projection of~$\pi$ onto $\Pi(\tilde\mu_1, \dots, \tilde\mu_N)$ that enjoys a control on its divergence. %
Intuitively, for $N=2$, the shadow $\tilde\pi$ is obtained by concatenating three transports: move $\tilde\mu_{1}$ to $\mu_{1}$ using a $W_{p}$-optimal transport, then follow the transport $\pi$ moving $\mu_{1}$ into $\mu_{2}$, and finally move $\mu_{2}$ to $\tilde\mu_{2}$ using a $W_{p}$-optimal transport. The general definition follows.

\begin{definition}[\cite{EcksteinNutz.21}]\label{de:shadow}
  Let $p\in [1,\infty)$ and $\mu_i, \tilde{\mu}_i \in \mathcal{P}_p(X_i)$,  $i=1, \dots, N$. Let $\kappa_{i} \in \Pi(\mu_i, \tilde\mu_i)$ be a coupling attaining $W_{p}(\mu_i,\tilde\mu_i)$ and  $\kappa_{i}=\mu_i \otimes K_i$ a disintegration.
  Given $\pi \in \Pi(\mu_1, \dots, \mu_N)$, its \emph{shadow} $\tilde\pi$ on $(\tilde\mu_1, \dots, \tilde\mu_N)$ is defined as the second marginal of $\pi \otimes K\in\cP(X\times X)$, where the kernel $K : X \rightarrow \mathcal{P}(X)$ is defined as
	$%
	K(x) = K_1(x_1) \otimes \dots \otimes  K_N(x_N). 
	$%
\end{definition}

The definition and the data processing inequality readily imply the following properties; see \cite[Lemma~3.2]{EcksteinNutz.21} for a detailed  proof.

\begin{lemma}[Shadow bounds]\label{le:shadow}
		Let $p\in [1,\infty)$ and $\mu_i, \tilde{\mu}_i \in \mathcal{P}_p(X_i)$,  $i=1, \dots, N$. Given $\pi \in \Pi(\mu_1, \dots, \mu_N)$, its shadow $\tilde{\pi} \in \Pi(\tilde{\mu}_1, \dots, \tilde{\mu}_N)$ satisfies
	\begin{align*}
	W_p(\pi, \tilde{\pi})^{p} &= %
	\sum_{i=1}^N W_p(\mu_i, \tilde{\mu}_i)^p,\\
	D_f(\tilde{\pi}, \tilde\mu_1 \otimes \dots \otimes \tilde\mu_N) &\leq D_f(\pi, \mu_1 \otimes \dots \otimes \mu_N).
	\end{align*}
\end{lemma}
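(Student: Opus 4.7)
The plan is to exploit a single auxiliary joint law $\bar\pi:=\pi\otimes K$ on $X\times X$, which by the definition of the shadow has first marginal $\pi$ and second marginal $\tilde\pi$. Both stated bounds can then be read off from~$\bar\pi$.

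For the Wasserstein identity, first use $\bar\pi$ as an admissible coupling between $\pi$ and $\tilde\pi$. Since the product metric satisfies $d_{X,p}(x,y)^p=\sum_{i=1}^N d_{X_i}(x_i,y_i)^p$, and since the joint law of $(x_i,y_i)$ under $\bar\pi$ equals $\kappa_i\in\Pi(\mu_i,\tilde\mu_i)$ by the product form of $K$ together with $\pi\in\Pi(\mu_1,\ldots,\mu_N)$, one obtains
\[
W_p(\pi,\tilde\pi)^p\leq\int d_{X,p}(x,y)^p\,d\bar\pi=\sum_{i=1}^N\int d_{X_i}(x_i,y_i)^p\,d\kappa_i=\sum_{i=1}^N W_p(\mu_i,\tilde\mu_i)^p,
\]
using the $W_p$-optimality of each $\kappa_i$. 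The reverse inequality is the easy direction: any $\gamma\in\Pi(\pi,\tilde\pi)$ pushes forward under $(x,y)\mapsto(x_i,y_i)$ to a coupling $\gamma_i\in\Pi(\mu_i,\tilde\mu_i)$, so $\int d_{X,p}(x,y)^p\,d\gamma\geq\sum_i W_p(\mu_i,\tilde\mu_i)^p$; taking the infimum over $\gamma$ yields equality.

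For the divergence bound, introduce the product reference $Q:=P\otimes K$ on $X\times X$. Because $P=\mu_1\otimes\cdots\otimes\mu_N$ and $K(x)=K_1(x_1)\otimes\cdots\otimes K_N(x_N)$, Fubini shows that the second marginal of $Q$ equals $\tilde\mu_1\otimes\cdots\otimes\tilde\mu_N=:\tilde P$. The data processing inequality, applied to the measurable projection $X\times X\to X$ onto the second factor, then gives $D_f(\tilde\pi,\tilde P)\leq D_f(\bar\pi,Q)$. It remains to verify $D_f(\bar\pi,Q)=D_f(\pi,P)$: assuming $\pi\ll P$ (else the right-hand side is $\infty$ and there is nothing to show) with density $h:=d\pi/dP$, the identity $\bar\pi(dx,dy)=h(x)\,Q(dx,dy)$ yields $d\bar\pi/dQ=h(x)$, and integrating out $y$ via the Markov kernel $K$ gives $\int f(h(x))\,Q(dx,dy)=\int f(h)\,dP=D_f(\pi,P)$.

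The only subtle point is the product-marginal computation for $Q$, which hinges on the product forms of both $P$ and $K$ and reduces to a short Fubini argument; the corresponding statement for $\bar\pi$ (second marginal equal to $\tilde\pi$) is built into \cref{de:shadow}. Everything else follows from data processing for $f$-divergences and an elementary density calculation, so no further machinery is required.
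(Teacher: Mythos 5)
Your proof is correct and follows essentially the same route the paper indicates: the coupling $\pi\otimes K$ built into \cref{de:shadow} gives the Wasserstein identity (upper bound from the product form of $K$ and optimality of the $\kappa_i$, lower bound by projecting any coupling of $\pi,\tilde\pi$ onto the coordinate pairs), and the divergence bound is the data processing inequality combined with the density computation $d\bar\pi/dQ=d\pi/dP$, exactly as in the cited \cite[Lemma~3.2]{EcksteinNutz.21}. No gaps.
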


\section{Main Results}\label{se:main}

One novel idea in this paper is to use a ``double shadow'' through auxiliary discrete marginals to approximate a given (typically singular) transport plan with one that has controlled divergence. 
To illustrate this, we start by re-proving the (known) convergence $\OT_{f, \varepsilon} \to \OT$ in our general setting.

\begin{proposition}\label{pr:qualitative}
  Let $p \in [1, \infty)$ and $\mu_i \in \mathcal{P}_p(X_i)$ for $i=1, \dots, N$. If $c$ is continuous with growth of order $p$, then $\lim_{\eps\to0}\OT_{f, \varepsilon} = \OT$.
\end{proposition}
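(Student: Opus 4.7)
The bound $\OT \leq \OT_{f,\eps}$ is immediate from $D_f \geq 0$, which follows by applying Jensen's inequality to the convex function $f$ with $f(1)=0$. Thus only $\limsup_{\eps \to 0}\OT_{f,\eps} \leq \OT$ needs justification, and I would obtain this by exhibiting, for every $\delta > 0$, some $\pi' \in \Pi(\mu_1, \dots, \mu_N)$ with $\int c\,d\pi' \leq \OT + \delta$ and $D_f(\pi', P) < \infty$; inserting such a $\pi'$ into the definition of $\OT_{f,\eps}$ and sending $\eps \to 0$ then gives $\limsup \OT_{f,\eps} \leq \OT + \delta$, and $\delta > 0$ is arbitrary.

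The obstacle is that a $\delta/2$-optimizer $\pi^*$ of $\OT$ is typically singular with respect to $P$ (for instance, concentrated on a Monge map), in which case $D_f(\pi^*,P) = \infty$. To perturb $\pi^*$ into a coupling with finite divergence, I would use a double-shadow construction. Pick discrete approximations $\mu_i^n \in \bigcup_k \mathcal{P}^k(X_i)$ with $W_p(\mu_i^n, \mu_i) \to 0$ as $n \to \infty$, which exist because $\mu_i \in \mathcal{P}_p(X_i)$. Let $\sigma^n$ be the shadow of $\pi^*$ on $(\mu_1^n, \dots, \mu_N^n)$ and $\pi^{(n)}$ the shadow of $\sigma^n$ back on the original marginals $(\mu_1, \dots, \mu_N)$. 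Two applications of the $W_p$-identity in \cref{le:shadow} combined with the triangle inequality give $W_p(\pi^{(n)}, \pi^*) \leq 2\bigl(\sum_{i=1}^N W_p(\mu_i^n, \mu_i)^p\bigr)^{1/p} \to 0$. Since $\sigma^n$ has finite support, the Radon--Nikodym density $d\sigma^n/d(\mu_1^n \otimes \cdots \otimes \mu_N^n)$ takes finitely many finite values, and $f : \R_+ \to \R$ is continuous and real-valued, hence bounded on bounded subsets; therefore $D_f(\sigma^n, \mu_1^n \otimes \cdots \otimes \mu_N^n) < \infty$. The divergence bound in \cref{le:shadow} then transfers this to $D_f(\pi^{(n)}, P) < \infty$.

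To close the argument, the continuity of $c$ and its growth of order $p$ make $\pi \mapsto \int c\,d\pi$ continuous for $W_p$ on $\Pi(\mu_1, \dots, \mu_N)$, so $\int c\,d\pi^{(n)} \to \int c\,d\pi^* \leq \OT + \delta/2$. Choose $n$ large enough that $\int c\,d\pi^{(n)} \leq \OT + \delta$ and set $\pi' = \pi^{(n)}$; then $\eps D_f(\pi', P) \to 0$ as $\eps \to 0$ because $D_f(\pi', P)$ is a finite constant for this fixed $n$. The main technical subtlety I anticipate is the finiteness of $D_f(\sigma^n, \mu_1^n \otimes \cdots \otimes \mu_N^n)$; for the purely qualitative statement this uses only the real-valuedness of $f$ on $\R_+$ rather than any of the more refined estimates in \cref{lem:boundfdiv}, whose sharper constants will only be needed later for the quantitative convergence rates.
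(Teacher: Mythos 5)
Your proposal is correct and follows essentially the same route as the paper's proof: a double shadow of a (near-)optimizer through discrete approximations of the marginals, the $W_p$-bound and divergence bound from \cref{le:shadow}, continuity of $\pi\mapsto\int c\,d\pi$ under $W_p$ from the growth condition, and then choosing $n$ first and $\eps$ afterwards. The only cosmetic differences are that you work with a $\delta/2$-optimizer rather than an exact optimizer of $\OT$ and spell out explicitly why the discrete-marginal coupling has finite divergence, which the paper treats as trivial.
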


\begin{proof}
	Using tightness of $\{\mu_{i}\}$, we can construct measures $\mu_{i}^{n}$ supported on~$n$ points with $W_p(\mu_{i}^{n}, \mu_{i})\to0$ for $i=1, \dots, N$. Let $\pi^*\in\Pi(\mu_1, \dots, \mu_N)$ be an optimizer of $\OT$. We introduce another coupling $\pi^{n}\in\Pi(\mu_1, \dots, \mu_N)$ as follows: first, let $\tilde{\pi}$ be the shadow of $\pi^*$ onto $(\mu_1^{n}, \mu_2^{n}, \dots, \mu_N^{n})$; then, define $\pi^{n}$ as the shadow of $\tilde{\pi}$ onto $(\mu_1, \dots, \mu_N)$. Using the triangle inequality and \cref{le:shadow}, this implies
  \begin{align*}
  	W_p(\pi^{n}, \pi^{*}) 
  	&\leq W_p(\pi^{n}, \tilde{\pi}) + W_p(\tilde{\pi}, \pi^{*})
  	 \leq 2 \left(\sum_{i=1}^N W_p(\mu_{i}^{n}, \mu_{i})^p\right)^{1/p}\to0.
	\end{align*}
	As $c$ is continuous with growth of order $p$, we conclude
	$
	\int c \,d\pi^{n} \to \int c \,d\pi^*.
	$
  On the other hand, \cref{le:shadow} yields
  \begin{align*}
    D_f(\pi^{n}, P) \leq D_f(\tilde{\pi}, \mu_1^{n} \otimes \mu_2^{n} \otimes \dots \mu_N^{n})<\infty,
	\end{align*}
	where the finiteness is trivial by discreteness of $\mu_i^{n}$. %
  Given $\delta>0$, choose~$n$ such that $\int c \,d\pi^{n} - \int c \,d\pi^* \leq\delta$, and then $\eps_{0}>0$ such that $\eps_{0} D_f(\pi^{n}, P)\leq \delta$. As $\pi^{n}$ is an admissible coupling for $\OT_{f, \varepsilon}$, we have shown $\OT_{f, \varepsilon} - \OT \leq 2\delta$ for all $\eps\leq \eps_{0}$. 
\end{proof}

\subsection{Rate for Lipschitz-type Costs}

To enable a quantitative version of \cref{pr:qualitative}, we need to control the speed of convergence $\int c \,d\pi^{n} \to \int c \,d\pi^*$ in its proof. We introduce the following adaptation of the condition~$({\rm A}_{L})$ of~\cite{EcksteinNutz.21}, stating that the integrated transport cost is Lipschitz with respect to the coupling.

\begin{definition}\label{de:ccond}
	Let $p\in [1,\infty)$ and $\mu_i \in \mathcal{P}_p(X_i)$,  $i=1, \dots, N$. Given constants $L, C\geq0$, we say that $c$ satisfies~\eqref{eq:ccond} if for all $\tilde\mu_i \in \mathcal{P}_p(X_i)$ with $W_p(\tilde{\mu}_i, \mu_i) \leq C$, $i=1, \dots, N$, we have
	\begin{equation}\label{eq:ccond}
	\left| \int c \, d(\pi- \tilde\pi)\right| \leq  LW_{p}(\pi,\tilde\pi) \tag{${\rm A}_{L, C}$}
	\end{equation}
	for all $\pi \in \Pi(\mu_1, \dots, \mu_N)$ and $\tilde\pi \in \Pi(\tilde\mu_1, \dots, \tilde\mu_N)$.
\end{definition}

Clearly~\eqref{eq:ccond} is satisfied (for all $C$) if $c$ is $L$-Lipschitz, but as discussed in \cite[Example~3.6]{EcksteinNutz.21}, the condition also captures various non-Lipschitz costs, like $c(x_{1},x_{2})=|x_{1}-x_{2}|^{p}$ on $\R^{d}\times\R^{d}$ with $p\in[1,\infty)$. In that case, the constant $L$  depends on the moments of the~$\mu_{i}$ and on~$C$. (The condition does not capture $|x_{1}-x_{2}|^{r}$ for $0<r<1$. An extension with a modulus of continuity instead of a Lipschitz constant is discussed in \cref{rk:modOfCont}.)

\begin{theorem}\label{thm:lip}
Let $p \in [1, \infty)$ and $\mu_i \in \mathcal{P}_p(X_i)$ for $i=1, \dots, N$.
Assume that $\mu_i$ satisfies ${\sf quant}_{p}(C,\alpha_i)$ for $i=2, \dots, N$ and that~$c$ satisfies~\eqref{eq:ccond}, for some $\alpha_2, \dots, \alpha_N \in(0,1]$ and $L,C \geq 0$.\footnote{Exponents $\alpha_{i}>1$ could be accommodated with minor changes in the constants below. In view of \cref{{rk:quantizationRate}}, the condition $\alpha_{i}\leq1$ is not restrictive in practice.} 
\begin{itemize}
	\item[(i)] Let $f(x) = x \log(x)$. Then for all $\varepsilon \in (0, 1]$,
	\[
	\OT_{f, \varepsilon} - \OT \leq \left(\sum_{i=2}^N \frac{1}{\alpha_i}\right) \varepsilon \log\left(\frac{1}{\varepsilon}\right) + 4 (N-1)^{1/p} L C \varepsilon.
	\]

	\item[(ii)] Let $f(x) = x \varphi(x)$, $\beta = \sum_{i=2}^N \frac{1}{\alpha_i}$, $\tilde{f}(x) = x \varphi(x^\beta)$. Assume that for some $x_{0},y_{0}\geq0$, $\tilde{f}$ is strictly increasing on $[x_0, \infty)$ with inverse  $\tilde{f}_{\inv}$ and $\varphi$ is nondecreasing. Suppose also that either $N=2$ and $\varphi$ is concave, or 
	the $\mu_i$ satisfy ${\sf quant}^{em}_{p}(C,\alpha_i)$ instead of ${\sf quant}_{p}(C,\alpha_i)$.
	Set $S_{\varepsilon} = \tilde{f}_{\inv}(\frac{1}{\varepsilon})$, which satisfies $\lim_{\varepsilon\to0} S_{\varepsilon}=\infty$ and $\lim_{\varepsilon\to0} \varepsilon S_{\varepsilon}=0$.   Then for all $\varepsilon \in [0, 1/x_0]$ small enough such that $S_\varepsilon \geq y_0^{1/\beta}+1$,
	\[
	\OT_{f, \varepsilon} - \OT \leq \frac{4(N-1)^{1/p} L C + 1}{S_\varepsilon}.
	\]
\end{itemize}
\end{theorem}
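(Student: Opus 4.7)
The plan is to make the proof of \cref{pr:qualitative} quantitative by choosing the quantization cardinalities $n_i$ as explicit functions of $\eps$ that balance the transport cost against the divergence. Fix an optimizer $\pi^* \in \Pi(\mu_1,\dots,\mu_N)$ of $\OT$, and for each $\eps>0$ pick integers $n_2,\dots,n_N$ (to be specified) together with discrete approximations $\mu_i^{n_i}$ realizing ${\sf quant}_p(C,\alpha_i)$; in the multi-marginal case of (ii), use the empirical version ${\sf quant}_p^{em}(C,\alpha_i)$ instead. Form the double shadow $\pi^\eps \in \Pi(\mu_1,\dots,\mu_N)$ exactly as in \cref{pr:qualitative}: shadow $\pi^*$ onto $(\mu_1,\mu_2^{n_2},\dots,\mu_N^{n_N})$ to get $\tilde\pi$, then shadow $\tilde\pi$ back onto $(\mu_1,\dots,\mu_N)$. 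Feasibility of $\pi^\eps$ yields
\[
\OT_{f,\eps} - \OT \leq \int c\, d(\pi^\eps - \pi^*) + \eps\, D_f(\pi^\eps, P),
\]
so the proof reduces to bounding the two summands.

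For the transport term, \cref{le:shadow} with the triangle inequality gives $W_p(\pi^\eps,\pi^*) \leq 2\bigl(\sum_{i=2}^N W_p(\mu_i,\mu_i^{n_i})^p\bigr)^{1/p} \leq 2C\bigl(\sum_{i=2}^N n_i^{-p\alpha_i}\bigr)^{1/p}$, whence \eqref{eq:ccond} yields $\bigl|\int c\, d(\pi^\eps - \pi^*)\bigr| \leq L\, W_p(\pi^\eps,\pi^*)$. For the divergence term, \cref{le:shadow} transfers control to the discrete side, $D_f(\pi^\eps,P) \leq D_f(\tilde\pi, \mu_1 \otimes \mu_2^{n_2} \otimes \cdots \otimes \mu_N^{n_N})$, on which \cref{lem:boundfdiv} applies directly: part (iii) in case (i) gives $\sum_{i=2}^N \log n_i$; in case (ii), part (i) (if $N=2$ and $\varphi$ concave) or part (ii) (empirical case) gives $\varphi\bigl(\prod_{i=2}^N n_i\bigr)$.

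For (i), the optimization is the classical log-linear trade-off: setting $n_i := \lceil \eps^{-1/\alpha_i}\rceil$ gives $n_i^{-\alpha_i} \leq \eps$ and $\log n_i \leq (1/\alpha_i)\log(1/\eps) + \log 2$; summing produces the leading term $\bigl(\sum 1/\alpha_i\bigr)\eps\log(1/\eps)$ with the remainder absorbed into $4(N-1)^{1/p} LC\, \eps$. For (ii), the balance $1/M \asymp \eps\varphi(M^\beta)$ rearranges to $\tilde f(M) \asymp 1/\eps$, forcing $M = S_\eps$. Setting $n_i := \lceil (S_\eps - 1)^{1/\alpha_i}\rceil$, convexity of $x \mapsto x^{1/\alpha_i}$ (which holds since $\alpha_i \leq 1$) gives $n_i \leq S_\eps^{1/\alpha_i}$, whence $\prod n_i \leq S_\eps^\beta$ and $n_i^{-\alpha_i} \leq 1/(S_\eps-1)$. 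Combined with the defining identity $\eps \varphi(S_\eps^\beta) = 1/S_\eps$ and $1/(S_\eps-1)\leq 2/S_\eps$ (valid once $S_\eps \geq 2$), this produces the stated $(4(N-1)^{1/p} LC + 1)/S_\eps$.

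The main delicate point is the bookkeeping in (ii): one must ensure that $\prod n_i \geq y_0$ so that $\varphi$ is monotone where it is evaluated, which is arranged precisely by the hypothesis $S_\eps \geq y_0^{1/\beta}+1$ via $\prod n_i \geq (S_\eps-1)^\beta \geq y_0$; and the qualitative limits $S_\eps \to \infty$ and $\eps S_\eps \to 0$ as $\eps\to 0$ follow from $\tilde f(x)/x = \varphi(x^\beta)\to \infty$, inherited from the standing assumption $\lim_{x\to\infty} f(x)/x = \varphi(x)\to\infty$. Everything else is straightforward algebra, with the ceiling roundings producing only $O(1)$ slack that can be absorbed into the displayed constants.
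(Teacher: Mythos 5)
Your construction is exactly the paper's: the same double shadow through the discretized marginals, \cref{le:shadow} for both the $W_p$ and divergence controls, \cref{lem:boundfdiv} (iii), (i)/(ii) for the discrete divergence, and the same balancing of $n_i$ against $\eps$. The one place where you deviate is the rounding convention, and in part (i) this deviation actually costs you the stated constant. With $n_i=\lceil \eps^{-1/\alpha_i}\rceil$ you get a clean transport term $2(N-1)^{1/p}LC\,\eps$, but the divergence term becomes $\eps\sum_{i\ge2}\log n_i \le \big(\sum_{i\ge2}\tfrac{1}{\alpha_i}\big)\eps\log(1/\eps)+(N-1)(\log 2)\,\eps$, and the extra $(N-1)(\log 2)\,\eps$ cannot be ``absorbed into $4(N-1)^{1/p}LC\,\eps$'' in general: if $LC$ is small (in the extreme, $L=0$, where the theorem asserts the bound $\big(\sum_{i\ge2}\tfrac1{\alpha_i}\big)\eps\log(1/\eps)$ with no $O(\eps)$ term at all), your bound is strictly weaker than the one claimed. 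The paper rounds the other way, $n_i=\lfloor \eps^{-1/\alpha_i}\rfloor$, so that $\log n_i\le \tfrac1{\alpha_i}\log(1/\eps)$ holds exactly and the rounding slack lands on the transport term, where it only inflates the factor $2LC$ to at most $4LC$ (via $\varrho\le 2$ in \eqref{eq:roundingLipschitz1}), matching the stated constant for every $L,C\ge0$. Switching your choice of $n_i$ to the floor (or stating your result with the constant $2(N-1)^{1/p}LC+(N-1)\log 2$) repairs this; the rest of part (i) is fine.

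Part (ii) is essentially correct and reproduces the paper's bound: your choice $n_i=\lceil(S_\eps-1)^{1/\alpha_i}\rceil$ satisfies $(S_\eps-1)^{1/\alpha_i}\le n_i\le S_\eps^{1/\alpha_i}$ (using $1/\alpha_i\ge1$), giving $\prod n_i\in[y_0,S_\eps^\beta]$ and the identity $\eps\varphi(S_\eps^\beta)=1/S_\eps$, together with the transport bound $2(N-1)^{1/p}LC/(S_\eps-1)\le 4(N-1)^{1/p}LC/S_\eps$. The only blemish is that the last inequality needs $S_\eps\ge2$, which is slightly stronger than the stated threshold $S_\eps\ge y_0^{1/\beta}+1$ (e.g.\ when $y_0=0$); this is harmless under the ``$\eps$ small enough'' proviso, and again disappears if you take $n_i=\lfloor S_\eps^{1/\alpha_i}\rfloor$ as in the paper, since $\lfloor x\rfloor\ge x/2$ for $x\ge1$ already yields the factor $2/S_\eps$ whenever $S_\eps\ge1$.
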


While the quantity $S_{\eps}$ in \cref{thm:lip}\,(ii) may not admit a closed-form expression, we can deduce more explicit bounds as follows. 

\begin{example}[Explicit bounds]\label{ex:explicitBounds}
  Choose a function $\psi\geq\varphi$ such that $\tilde{g}(x):=x\psi(x^{\beta})$ is strictly increasing with inverse denoted~$\tilde{g}_{\inv}$. Then $\tilde{g}_{\inv}\leq \tilde{f}_{\inv}$ and hence $1/S_{\eps} \leq 1/\tilde{g}_{\inv}(1/\eps)$, so that \cref{thm:lip}\,(ii) implies
	\[
	\OT_{f, \varepsilon} - \OT \leq (4(N-1)^{1/p} L C + 1) \frac{1}{\tilde{g}_{\inv}(1/\eps)}.
	\]
	We thus aim to choose $\psi$ so that $\tilde{g}_{\inv}$ has an explicit expression.
  As an example,  consider the $L^{\rho}$ regularization given by $f(x)=\frac{1}{\rho}(x^{\rho}-1)$ with $\rho>1$. Here $\varphi(x)=\frac{1}{\rho}x^{\rho-1}-\frac{1}{\rho x} \leq \frac{1}{\rho}x^{\rho-1} =:\psi(x)$. With this choice of $\psi$, we have $\tilde{g}(x)=\frac{1}{\rho}x^{(\rho-1)\beta+1}$ and the explicit inverse $\tilde{g}_{\inv}(x)=\rho x^{1/[(\rho-1)\beta+1]}$. As a result, for all $\eps\in(0,1]$,
	\[
	 \OT_{f, \varepsilon} - \OT \leq K\eps^{\frac{1}{(\rho-1)\beta+1}}, \quad K:=(4(N-1)^{1/p} L C + 1)/\rho.
	\]
\end{example}

\begin{remark}[On $\mu_{1}$]\label{rk:missingQuantRate}
  In \cref{thm:lip}, nothing is assumed about the quantization of~$\mu_{1}$. In an application, one would thus label~$\mu_{1}$ the marginal with the slowest quantization rate. In particular, for $N=2$ marginals on $\R^{d_{i}}$, we typically have $1/\alpha_{2}= d_{1} \wedge d_{2}$ by Remark~\ref{rk:quantizationRate}.
\end{remark}

\begin{proof}[Proof of \cref{thm:lip}]
	Let $\pi^*\in\Pi(\mu_1, \dots, \mu_N)$ be an optimizer of $\OT$. By our assumption, there exist empirical quantizations $\mu_i^{n_i}$ for the marginals $i=2, \dots, N$ such that $W_p(\mu_{i}^{n_{i}}, \mu_{i}) \leq C n_{i}^{-\alpha_{i}}$. We introduce a coupling $\pi\in\Pi(\mu_1, \dots, \mu_N)$ (depending on $n_{2},\dots,n_{N}$) as a double shadow: first, let $\tilde{\pi}$ be the shadow of $\pi^*$ onto $(\mu_1, \mu_2^{n_2}, \dots, \mu_N^{n_N})$; then, define $\pi$ as the shadow of $\tilde{\pi}$ onto $(\mu_1, \dots, \mu_N)$. Using the triangle inequality and \cref{le:shadow}, 
  \begin{align*}
  	W_p(\pi, \pi^{*}) 
  	&\leq W_p(\pi, \tilde{\pi}) + W_p(\tilde{\pi}, \pi^{*})
  	 \leq 2 \left(\sum_{i=2}^N W_p(\mu_{i}^{n_{i}}, \mu_{i})^p\right)^{1/p}.
	\end{align*}
	Combining this with our assumption~\eqref{eq:ccond}, we deduce
	\begin{align*}
	\int c \,d\pi - \int c \,d\pi^* %
	\leq 2L \left(\sum_{i=2}^N W_p(\mu_{i}^{n_{i}}, \mu_{i})^p\right)^{1/p} 
	\leq 2 LC \left(\sum_{i=2}^N n_{i}^{-\alpha_{i}p}\right)^{1/p}.
	\end{align*}
  On the other hand, \cref{le:shadow} again yields
  \begin{align*}
    D_f(\pi, P) \leq D_f(\tilde{\pi}, \mu_1 \otimes \mu_2^{n_2} \otimes \dots \mu_N^{n_N}).
	\end{align*}	
	As $\pi$ is an admissible coupling for $\OT_{f, \varepsilon}$, we have proved
	\begin{align}\label{eq:proofLip1}
	  \OT_{f, \varepsilon} - \OT \leq 2 LC \left(\sum_{i=2}^N n_{i}^{-\alpha_{i}p}\right)^{1/p} + \eps D_f(\tilde{\pi}, \mu_1 \otimes \mu_2^{n_2} \otimes \dots \mu_N^{n_N})
	\end{align} 
	and the last divergence term can be bounded by \cref{lem:boundfdiv}. In the remainder of the proof, we choose $n_{i}$ as a suitable function of $\eps$ to balance the decay of the two terms on the right-hand side of~\eqref{eq:proofLip1}.
	As $n_{i}$ is an integer, we need to deal with a rounding error: given $S\in[1,\infty)$, we define $\varrho(S)>0$ as
  \begin{align}\label{eq:roundingLipschitz1}
    \varrho(S) := \left(\frac{1}{N-1}\sum_{i=2}^N \frac{S^{p}}{\lfloor S^{1/\alpha_i} \rfloor^{\alpha_i p}} \right)^{1/p} %
  \end{align}
  so that $1\leq  \varrho(S) \leq 2^{\max_{i\geq2}\alpha_{i}} \leq 2$ and $\lim_{S\to\infty}\varrho(S)= 1$. We then have
  \begin{align}\label{eq:roundingLipschitz2}
    \left(\sum_{i=2}^N \lfloor S^{1/\alpha_i} \rfloor^{-\alpha_i p}\right)^{1/p} =  \frac{\varrho(S) (N-1)^{1/p}}{S} \leq \frac{2 (N-1)^{1/p}}{S}.
  \end{align}
	
  \noindent (i) Set $n_i = \lfloor \varepsilon^{-1/\alpha_i} \rfloor$ for $i=2, \dots, N$. For $S=S_{\eps}=1/\eps$, \eqref{eq:roundingLipschitz2} yields
  $$
     \left(\sum_{i=2}^N n_{i}^{-\alpha_{i}p}\right)^{1/p} =  \frac{\varrho(S_\varepsilon) (N-1)^{1/p}}{S_\varepsilon} \leq 2(N-1)^{1/p}\varepsilon,
  $$
  and \cref{lem:boundfdiv}\,(iii) bounds the divergence term by
  $$
    \eps D_f(\tilde{\pi}, \mu_1 \otimes \mu_2^{n_2} \otimes \dots \mu_N^{n_N}) \leq \eps \sum_{i=2}^N\log (n_i) 
    \leq \eps \sum_{i=2}^N \frac{1}{\alpha_i} \log\left(\frac{1}{\varepsilon}\right).
  $$
  In view of~\eqref{eq:proofLip1}, the claim follows. 
  
  \vspace{.3em}
  
  \noindent (ii) Set $n_i = \lfloor S_\varepsilon^{1/\alpha_i}\rfloor$ for $i=2, \dots, N$, where $S_\varepsilon$ was defined in the theorem. Similarly as in~(i),
	\[
	  \left(\sum_{i=2}^N n_{i}^{-\alpha_{i}p}\right)^{1/p} \leq \frac{\varrho(S_\varepsilon) (N-1)^{1/p}}{S_\varepsilon}  \leq 2 (N-1)^{1/p} \frac{1}{S_\varepsilon}.
	\]
  On the other hand, $S_\varepsilon \geq y_0^{1/\beta}+1$ implies $y_{0} \leq \prod_{i=2}^N n_i\leq S_\varepsilon^{\beta}$ by elementary arguments. Under ${\sf quant}^{em}_{p}(C,\alpha_i)$, 
	\cref{lem:boundfdiv}\,(ii) and monotonicity of $\varphi$ on $[y_0, \infty)$ yield
	\begin{align*}
	  \eps D_f(\tilde{\pi}, \mu_1 \otimes \mu_2^{n_2} \otimes \dots \mu_N^{n_N}) 
	  \leq \eps\varphi\left(\prod_{i=2}^N n_i\right) 
	  \leq \eps \varphi(S_\varepsilon^\beta) 
	  &=  \frac{\varepsilon \tilde{f}(S_\varepsilon)}{S_\varepsilon} \\
	  &= \frac{\varepsilon \tilde{f}(\tilde{f}_{\inv}(\frac{1}{\varepsilon}))}{S_\varepsilon} = \frac{1}{S_{\varepsilon}}
	\end{align*}
	and now the claim again follows from~\eqref{eq:proofLip1}. 
	For the claim under $N=2$, we use Lemma \ref{lem:boundfdiv}\,(i) instead of Lemma~\ref{lem:boundfdiv}\,(ii).
\end{proof}

\begin{remark}[On the constant]\label{rk:lipschitzBetterConst}
  The constant 4 in \cref{thm:lip}\,(i),(ii) can be replaced by $2\varrho(1/\eps)$ and $2\varrho(S_{\varepsilon})$, respectively, where $\varrho(\cdot)$ is defined in~\eqref{eq:roundingLipschitz1} and satisfies $1\leq \varrho(\cdot) \leq 2$. As $\varrho(S)=1+o(1/S)$, this improves the asymptotic constant for $\eps\to0$ in \cref{thm:lip} from 4 to 2. 
\end{remark} 

\begin{remark}[On the proof]\label{rk:lipschitzProof}
  In \cref{thm:lip} and its proof, the entropic case~(i) is treated separately from the general case~(ii) to obtain an expression that is more explicit and more in line with the literature. In fact, the bound in \cref{thm:lip}\,(ii) is slightly sharper even for the entropic divergence, as its proof is based on the optimal tradeoff between the transport and divergence terms: both have the same rate $1/S_\varepsilon$, whereas in the proof of~(i) they have differing rates $\eps$ and $\eps\log(1/\eps)$. However, $S_{\varepsilon} = \tilde{f}_{\inv}(\frac{1}{\varepsilon})$ does not admit an explicit expression in the entropic case, so we chose instead $S_{\varepsilon} =1/\eps$ to obtain an explicit statement. The leading-order term nevertheless turns out to be sharp; see \cref{pr:lipschitzSharp}.
\end{remark}

\subsection{Rate for Twice Differentiable Costs}

For the main result, we focus on the exponent $p=2$ for the Wasserstein metric and on  closed convex sets $X_i \subset \mathbb{R}^{d_i}$ endowed with the Euclidean norm~$|\cdot|$. We recall that $X=X_{1}\times\cdots\times X_{N}$ then also carries the Euclidean metric and write $c\in C^{2}(X)$ to indicate that $c$ is defined and twice continuously differentiable on a neighborhood of $X\subset \R^{d_{1}+\cdots+d_{N}}$.

For costs with bounded second derivative and an additional regularity condition, we shall improve upon the dimension-dependence in \cref{thm:lip} by a factor~$1/2$, at least for marginals of equal dimension. %
For that improvement, \eqref{eq:ccond} is too weak (as evidenced in \cref{pr:lipschitzSharp}). Instead, we shall use a martingale argument to achieve a full cancellation of the integrated first-order term in the Taylor expansion of~$c$. For this, we directly quantize an optimal transport, not just the marginals. In the following statement, its quantization rate~$\alpha$ is taken as given---we shall elaborate below on how to bound it in practice.

\begin{theorem}\label{thm:second}
	Let $X_i \subset \mathbb{R}^{d_i}$ be convex and $\mu_i \in \mathcal{P}_2(X_i)$ for $i=1, \dots, N$. Assume that $c\in C^{2}(X)$ has  bounded second derivative:
\begin{align}\label{eq:bddSecondDeriv}
  w^\top c''(x) w \leq B |w|^2 \quad \mbox{for all} \quad x, w \in X,\quad \mbox{for some} \quad B\geq0,
\end{align} 
 and that $\OT$ admits an optimal transport~$\pi^*$ satisfying ${\sf quant}_{2}(C,\alpha)$ for some $\alpha\in(0,1]$ and $C > 0$.
	\begin{itemize}
		\item[(i)] Let $f(x) = x \log(x)$. Then for all $\varepsilon \in (0, 1]$,%
		\[
		\OT_{f, \varepsilon} - \OT \leq  \frac{N-1}{2\alpha} \varepsilon \log\left(\frac{1}{\varepsilon}\right) + 8B C \varepsilon.
		\]		
		\item[(ii)] Let $N=2$, $f(x) = x \varphi(x)$ with $\varphi$ nondecreasing and concave, let $\beta = \frac{1}{2 \alpha}$ and $\tilde{f}(x) = x \varphi(x^\beta)$.
		Assume that for some $x_{0}\geq0$, $\tilde{f}$ is strictly increasing on $[x_0, \infty)$ with inverse  $\tilde{f}_{\inv}$. Set $S_{\varepsilon} = \tilde{f}_{\inv}(\frac{1}{\varepsilon})$, which satisfies $\lim_{\varepsilon\to0} S_{\varepsilon}=\infty$ and $\lim_{\varepsilon\to0} \varepsilon S_{\varepsilon}=0$.  Then for all $\varepsilon \in (0, \frac{1}{x_0}]$ small enough such that $S_\varepsilon \geq1$,%
		\[
		\OT_{f, \varepsilon} - \OT \leq \frac{8 BC + 1}{S_\varepsilon}.
		\]
	\end{itemize}
\end{theorem}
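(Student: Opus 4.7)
I would adapt the double-shadow proof of \cref{thm:lip}, replacing the marginalwise quantization by a joint $W_{2}$-quantization of $\pi^{*}$ and exploiting a martingale property of that quantization to cancel the first-order term in a Taylor expansion of $c$. Let $\pi^{*}\in\Pi(\mu_{1},\dots,\mu_{N})$ be $\OT$-optimal. By~${\sf quant}_{2}(C,\alpha)$, there exists $\pi^{n}\in\mathcal{P}^{n}(X)$ with $W_{2}(\pi^{n},\pi^{*})\le Cn^{-\alpha}$, and after passing to a genuine $W_{2}$-minimizer in $\mathcal{P}^{n}(X)$ the nearest-neighbour map $T\colon X\to\supp(\pi^{n})$ furnishes $T_{\#}\pi^{*}=\pi^{n}$ together with the barycentre identity $\mathbb{E}_{X^{*}\sim\pi^{*}}[X^{*}\mid T(X^{*})]=T(X^{*})$. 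Let $\mu_{i}^{n}$ denote the $i$-th marginal of $\pi^{n}$ and define $\tilde\pi$ as the shadow of $\pi^{n}$ onto $(\mu_{1},\dots,\mu_{N})$ in the sense of \cref{de:shadow}, using however the backward couplings $\kappa_{i}:=\mathrm{Law}(T_{i}(X^{*}),X_{i}^{*})\in\Pi(\mu_{i}^{n},\mu_{i})$ in place of a generic $W_{2}$-optimal choice. The divergence inequality of \cref{le:shadow} is a pure data-processing statement and therefore still applies to this modification; combined with \cref{lem:boundfdiv}(iii) in case~(i) and \cref{lem:boundfdiv}(i) in case~(ii), it gives $D_{f}(\tilde\pi,P)\le(N-1)\log n$ and $D_{f}(\tilde\pi,P)\le\varphi(n)$, respectively.

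\textbf{Martingale Taylor (the key step).} Realize $(X^{*},X',Y)$ on a common probability space with $Y=T(X^{*})$, $X^{*}\mid Y$ distributed as under $\pi^{*}$, and $X'\mid Y$ conditionally independent of $X^{*}\mid Y$ with law $\bigotimes_{i}\mathrm{Law}(X_{i}^{*}\mid Y_{i})$. Then $X'\sim\tilde\pi$; the definition of $\kappa_{i}$ forces $\mathbb{E}[X_{i}'\mid Y]=Y_{i}$ for each $i$ and hence $\mathbb{E}[X'\mid Y]=Y$, so both $X^{*}$ and $X'$ are martingale dilations of $Y$. Taylor-expanding $c$ about $Y$ gives
\begin{equation*}
  c(X')-c(X^{*})=\nabla c(Y)\cdot(X'-X^{*})+R'-R^{*},
\end{equation*}
with integral remainders $R'$ and $R^{*}$; on taking expectations $\mathbb{E}[X'-X^{*}\mid Y]=0$ kills the linear term, leaving
\begin{equation*}
  \int c\,d\tilde\pi-\int c\,d\pi^{*}=\mathbb{E}[R']-\mathbb{E}[R^{*}].
\end{equation*}
Because the joint laws of $(Y_{i},X_{i}')$ and $(Y_{i},X_{i}^{*})$ coincide, $\mathbb{E}|X'-Y|^{2}=\mathbb{E}|X^{*}-Y|^{2}=W_{2}(\pi^{*},\pi^{n})^{2}\le C^{2}n^{-2\alpha}$, and the Hessian hypothesis~\eqref{eq:bddSecondDeriv} controls both remainders by a constant multiple of these $L^{2}$-quantities. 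This produces $\int c\,d\tilde\pi-\int c\,d\pi^{*}\le\mathrm{const}\cdot BC^{2}n^{-2\alpha}$, the quadratic-in-$n^{-\alpha}$ rate that is the source of the improved exponent $1/(2\alpha)$.

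\textbf{Balancing and main obstacle.} Combining the two estimates yields $\OT_{f,\varepsilon}-\OT\le\mathrm{const}\cdot BC^{2}n^{-2\alpha}+\varepsilon\cdot(\text{divergence bound})$. For~(i), the choice $n=\lfloor\varepsilon^{-1/(2\alpha)}\rfloor$ balances the two pieces, since $n^{-2\alpha}\asymp\varepsilon$ and $(N-1)\log n\asymp\tfrac{N-1}{2\alpha}\log(1/\varepsilon)$, with the integer rounding absorbed through the factor $\varrho(\cdot)$ exactly as in \cref{rk:lipschitzBetterConst}. For~(ii), set $n=\lfloor S_{\varepsilon}^{1/\alpha}\rfloor$ with $\beta=1/(2\alpha)$ and the final calculation in the proof of \cref{thm:lip}(ii) transfers essentially verbatim, now with the improved exponent. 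The main obstacle is that \eqref{eq:bddSecondDeriv} is only one-sided: the bound $R'\le\tfrac{B}{2}|X'-Y|^{2}$ is immediate, but an upper bound on $-\mathbb{E}[R^{*}]$ (needed for the symmetric difference $\mathbb{E}[R']-\mathbb{E}[R^{*}]$) is not. I would handle this either by expanding $c$ symmetrically about $X^{*}$, substituting the integrated form $\nabla c(X^{*})=\nabla c(Y)+\int_{0}^{1}c''(Y+s(X^{*}-Y))(X^{*}-Y)\,ds$ and invoking the martingale identity a second time; or by observing that the diagonal contributions to $\mathbb{E}[R']-\mathbb{E}[R^{*}]$ cancel by marginal equivalence, so that only the off-diagonal Hessian pairings $\mathbb{E}[(X_{i}^{*}-Y_{i})^{\top}c''_{ij}(X_{j}^{*}-Y_{j})]$ with $i\neq j$ survive and can be estimated directly by the one-sided bound. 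Either route delivers the announced linear-in-$\varepsilon$ remainder.
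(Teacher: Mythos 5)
Your construction coincides with the paper's proof in all essentials: pass to an optimal $W_2$-quantizer of $\pi^*$, use its barycentre (martingale) property as in \cref{le:martingale}, recouple the coordinatewise kernels conditionally independently given the quantizer point so as to restore the marginals while preserving the martingale property, cancel the first-order Taylor term, bound the divergence via \cref{lem:boundfdiv}, and balance $n$ against $\varepsilon$; your single combined expansion about $Y$ versus the paper's two separate comparisons is only cosmetic. However, what you call the main obstacle is not resolved by either of your remedies. In route (a), after substituting $\nabla c(X^*)=\nabla c(Y)+\int_0^1 c''(Y+s(X^*-Y))(X^*-Y)\,ds$ and using the martingale identity to remove the $\nabla c(Y)$ term, the surviving correction is $-\int_0^1 (X^*-Y)^\top c''\big(Y+s(X^*-Y)\big)(X^*-Y)\,ds$, and bounding this from above requires a \emph{lower} bound on the quadratic form of $c''$; the one-sided condition has merely been relocated, not circumvented. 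Route (b) fails twice over: the Hessians inside $R'$ and $R^*$ are evaluated at points of the segments $[Y,X']$ and $[Y,X^*]$, which depend on all coordinates, so equality of the laws of $(Y_i,X_i')$ and $(Y_i,X_i^*)$ does not make the ``diagonal'' contributions cancel (they would only if $c''$ were constant); and off-diagonal Hessian blocks are not controlled by a one-sided quadratic-form bound (take $c''$ with very negative diagonal blocks and large off-diagonal blocks, which still satisfies \eqref{eq:bddSecondDeriv} with small $B$). The paper's proof simply bounds the Taylor remainders in absolute value, i.e.\ it reads ``bounded second derivative'' as the two-sided bound $|w^\top c''(x)w|\le B|w|^2$; with that reading your expansion about $Y$ immediately gives $\mathbb{E}[R']\le\frac{B}{2}\,\mathbb{E}|X'-Y|^2$ and $-\mathbb{E}[R^*]\le\frac{B}{2}\,\mathbb{E}|X^*-Y|^2$, and no detour is needed.

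A second, smaller but concrete error is the balancing in case (ii): with $\beta=\frac{1}{2\alpha}$ you must take $n=\lfloor S_\varepsilon^{\beta}\rfloor=\lfloor S_\varepsilon^{1/(2\alpha)}\rfloor$, not $n=\lfloor S_\varepsilon^{1/\alpha}\rfloor$. With your choice the transport term is of order $S_\varepsilon^{-2}$, but the divergence term becomes $\varepsilon\varphi(n)\approx\varepsilon\varphi(S_\varepsilon^{2\beta})$, which in general exceeds $1/S_\varepsilon$ (for $\varphi(x)=\frac{1}{\rho}x^{\rho-1}$ it is of order $S_\varepsilon^{\beta(\rho-1)-1}$), so the claimed bound $(8BC+1)/S_\varepsilon$ does not follow. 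With $n=\lfloor S_\varepsilon^{\beta}\rfloor$ one gets $\varepsilon\varphi(n)\le\varepsilon\varphi(S_\varepsilon^{\beta})=\varepsilon\tilde f(S_\varepsilon)/S_\varepsilon=1/S_\varepsilon$, while the transport term remains $O(1/S_\varepsilon)$ after the integer-rounding correction, exactly as in the paper. The remaining ingredients of your proposal (the case (i) balancing, the divergence bounds via \cref{lem:boundfdiv}, and the identity $\mathbb{E}|X'-Y|^2=\mathbb{E}|X^*-Y|^2=W_2(\pi^n,\pi^*)^2$, which uses the coordinatewise equality of laws and the Euclidean norm) are correct and match the paper.
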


Before proving the theorem, we recall the martingale property of $W_{2}$-quantization; see, e.g., \cite[Proposition~5.1, p.\,139]{Pages.18} for a proof. This property and its interplay with the Taylor expansion in~\eqref{eq:taylor} below explain why our result is limited to $p=2$.

\begin{lemma}\label{le:martingale}
  Given a probability $\eta\in \cP_{2}(Y)$ on a Polish space~$Y$ and $n\geq1$, there exists $\eta^{n}\in \argmin_{\eta^n \in \mathcal{P}^n(Y)} W_2(\eta^n, \eta)$, called an optimal $W_2$-quantizer of~$\eta$ on~$n$ points. There is a coupling  $\theta \in \Pi(\eta^{n}, \eta)$ attaining $W_2(\eta^{n}, \eta)$, meaning that $\int |x-y|^2 \,\theta(dx, dy) = W_2(\eta^{n}, \eta)^{2}$, and it is a martingale: the kernel $\kappa$ in its disintegration $\theta = \eta^{n} \otimes \kappa$ satisfies $\int y \,\kappa(x, dy) = x$ for $\tilde{\pi}$-almost all $x$.
\end{lemma}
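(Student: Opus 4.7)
The plan is to prove three things in sequence: existence of an optimal $n$-point $W_2$-quantizer; existence of a coupling attaining $W_2(\eta^n,\eta)$; and the martingale property of any such coupling. Because the statement invokes the Euclidean distance and conditional means, I treat $Y$ as a closed convex subset of a Euclidean space, which is the setting in which this lemma is used in the paper.

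For existence of the quantizer, I would first note that for any support points $y_1,\dots,y_n\in Y$, the best weights are supplied by a measurable Voronoi partition $\{V_i\}$ through $p_i := \eta(V_i)$, reducing the quantization problem to minimizing the continuous functional
\[
F(y_1,\dots,y_n) := \int \min_{1\le i\le n} |y - y_i|^2 \,\eta(dy)
\]
over $Y^n$. A standard coercivity argument (points of a minimizing sequence that escape to infinity have vanishing Voronoi mass and may be discarded, reducing to strictly fewer support points) yields a minimizer in $\mathcal{P}^n(Y)$. Given an optimal $\eta^n = \sum_i p_i \delta_{y_i}$, the candidate coupling
\[
\theta := \sum_{i=1}^n \delta_{y_i}\otimes \eta(\,\cdot\,\cap V_i) \;\in\; \Pi(\eta^n,\eta)
\]
has transport cost $\sum_i \int_{V_i}|y_i-y|^2\,\eta(dy) = F(y_1,\dots,y_n) = W_2(\eta^n,\eta)^2$, so it attains $W_2$.

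The main step is the martingale property. Let $\theta = \eta^n\otimes \kappa$ be \emph{any} optimal coupling and define the conditional barycenters $\bar y_i := \int y\,\kappa(y_i,dy) \in Y$ (using convexity of $Y$). Form the perturbed quantizer $\bar\eta^n := \sum_i p_i \delta_{\bar y_i}\in\mathcal{P}^n(Y)$ together with the coupling $\bar\theta := \sum_i \delta_{\bar y_i} \otimes p_i\kappa(y_i,\cdot)\in \Pi(\bar\eta^n,\eta)$. The variance identity applied to the probability measure $\kappa(y_i,\cdot)$ gives
\[
\int |\bar y_i - y|^2\,\kappa(y_i,dy) = \int |y_i - y|^2\,\kappa(y_i,dy) - |y_i - \bar y_i|^2,
\]
so summing against $p_i$ yields $W_2(\bar\eta^n,\eta)^2 \le W_2(\eta^n,\eta)^2 - \sum_i p_i|y_i - \bar y_i|^2$. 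Optimality of $\eta^n$ forces the reverse inequality, hence $\sum_i p_i|y_i - \bar y_i|^2 = 0$, i.e., $y_i = \bar y_i$ for every $i$ with $p_i>0$, which is precisely the claimed martingale property $\int y\,\kappa(y_i,dy)=y_i$. The main obstacle is admissibility of the shifted quantizer $\bar\eta^n$: each barycenter $\bar y_i$ must lie in $Y$ for $\bar\eta^n$ to compete with $\eta^n$. Convexity of $Y$ supplies this for free; without it, the centroid argument would break down, and indeed the martingale conclusion would not even be well-posed.
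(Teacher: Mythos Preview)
Your argument is correct and is essentially the standard proof of this fact. Note that the paper does not actually supply its own proof of this lemma; it simply cites \cite[Proposition~5.1, p.\,139]{Pages.18}. The centroid-perturbation argument you give---replacing each support point $y_i$ by the barycenter $\bar y_i$ of its Voronoi cell and invoking the variance identity to force $y_i=\bar y_i$---is precisely the classical approach found there (and in Graf--Luschgy). Your observation that the lemma, as stated for a general Polish space~$Y$, tacitly requires $Y$ to be a closed convex subset of a Euclidean space (so that barycenters exist, stay in~$Y$, and the expression $\int y\,\kappa(x,dy)=x$ makes sense) is well taken; this is indeed the only setting in which the paper applies the lemma (\cref{thm:second}).
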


\begin{proof}[Proof of \cref{thm:second}.]
For $n\geq 1$, let  $\tilde{\pi}\in \cP(X)$ be an optimal $W_2$-quantizer of~$\pi^*$ on~$n$ points and let $\theta \in \Pi(\tilde{\pi}, \pi^*)$ be the coupling attaining $W_2(\tilde{\pi}, \pi^*)$; cf.\ \cref{le:martingale}. The martingale property of~$\theta$ 
  implies that 
  $
		\int h(x) \cdot (y-x) \,\theta(dx, dy) = 0
	$
	for any measurable function $h : X \rightarrow \R^{d_{1}+\cdots+d_{N}}$ of linear growth. As $c$ has bounded second derivative, its first derivative $c'$ has linear growth and thus
  \[
	  \int c'(x) \cdot (y-x) \,\theta(dx, dy) = 0.
	\]
		Considering the Taylor expansion of $c(y)$, this shows that the integral of the first-order term vanishes, and then the bound on the second derivative yields
	\begin{align}\label{eq:taylor}
	 \left|\int c \,d\pi^{*} - \int c \,d\tilde{\pi}\right| 
	  & = \left|\int (c(y)-c(x))\,\theta(dx, dy)\right|  \nonumber \\
		& \leq B \int |x-y|^2 \,\theta(dx, dy)
		= B W_2(\tilde{\pi}, \pi^*)^{2}.
  \end{align}  
  Denote by $\mu_i^n$ the marginal of $\tilde{\pi}$ on $X_i$ and by $\theta_i$ the marginal of $\theta$ on $X_i \times X_i$. We observe that $\theta_i \in \Pi(\mu_i^n, \mu_i)$ is again a martingale coupling.
  Furthermore, as we are using the Euclidean norm,
		\begin{align}\label{eq:thetai}
		\sum_{i=1}^N \int |x_i-y_i|^2 \,\theta_i(dx_i, dy_i) = \int |x-y|^2 \,\theta(dx, dy) = W_2(\tilde{\pi}, \pi^*)^{2}.%
		\end{align}
		Next, we construct a coupling $\pi\in \Pi(\mu_1, \dots, \mu_N)$ that is reminiscent of the shadow of $\tilde{\pi}$ but uses the kernels of $\theta_i$ instead of $W_{2}$-optimal transports between~$\mu_i^n$ and~$\mu_i$. Namely, decomposing $\theta_i = \mu_i^n \otimes K_i$ and writing  $K(x) := K_1(x_1) \otimes \dots \otimes K_N(x_N)$, we set $\gamma := \tilde{\pi} \otimes K\in\cP(X\times X)$ and define $\pi\in \Pi(\mu_1, \dots, \mu_N)$ as the second marginal of $\gamma$. Probabilistically speaking, this means that we take the (possibly dependent) components of the vector martingale $\theta$ and combine their laws into a new vector martingale~$\gamma$ with independent components. In particular,  $\gamma \in \Pi(\tilde{\pi}, \pi)$ is also a martingale coupling: $\int y_i \,K(x, dy) = \int y_i \,K_i(x_i, dy_i) = x_i$ for all~$i$ by the martingale property of $\theta_i$. Repeating the argument for~\eqref{eq:taylor} with~$\gamma$ instead of~$\theta$, inserting the definition of $\gamma$ and using~\eqref{eq:thetai}, we conclude that
	\begin{align*}
	\left| \int c \,d\pi - \int c \,d\tilde\pi\right| &\leq B \int |x-y|^2 \,\gamma(dx, dy)\\ &= B \sum_{i=1}^N \int |x_i - y_i|^2 \, \theta_i(dx_i, dy_i) = B W_2(\tilde{\pi}, \pi^*)^{2}.
	\end{align*}
	In view of~\eqref{eq:taylor}, the triangle inequality and the assumption on~$\pi^{*}$ then yield
	\begin{align}\label{eq:transportDiff}
	\int c \,d\pi - \int c \,d\pi^* \leq 2B W_2(\tilde{\pi}, \pi^*)^{2} \leq 2 B C n^{-2\alpha}.
  \end{align}
  On the other hand, by the data processing inequality (e.g., \cite[Lemma~1.6]{Nutz.20}), the construction of~$\pi$ implies
		\[
	D_f(\pi, P) \leq D_f(\tilde{\pi}, \mu_1^n \otimes \dots \otimes \mu_N^n).
	\]
	This bound is analogous to \cref{le:shadow} (indeed the reasoning is the same).
	
	The rest of the proof is analogous to \cref{thm:lip}. To deal with the rounding error, we now define $\varrho(S)$ for $S\in[1,\infty)$ as
  \begin{align}\label{eq:roundingSmooth1}
    \varrho(S) := \left(\frac{S^{\frac{1}{2\alpha}}}{\lfloor S^{\frac{1}{2\alpha}}\rfloor} \right)^{2\alpha}
  \end{align}
  so that $1\leq  \varrho(S) \leq 2^{2\alpha} \leq 4$ and $\lim_{S\to\infty}\varrho(S)= 1$. In particular,
  \begin{align}\label{eq:roundingSmooth2}    
    \lfloor S^{\frac{1}{2\alpha}}\rfloor^{-2\alpha} = \varrho(S) S^{-1} \leq  4S^{-1}.
  \end{align}

	\noindent (i) Let $n = \lfloor \varepsilon^{-\frac{1}{2\alpha}}\rfloor$. Then~\eqref{eq:transportDiff} and~\eqref{eq:roundingSmooth2}  for $S=S_{\eps}=1/\eps$ imply
	\begin{align*}
	\int c \,d\pi - \int c \,d\pi^* &\leq 2 BC \varrho(S_\varepsilon) S_\varepsilon^{-1} \leq 8BC \varrho(S_\varepsilon) \varepsilon
	\end{align*}
	while Lemma~\ref{lem:boundfdiv}\,(iii) yields
	$
	 D_f(\tilde{\pi}, \mu_1^n \otimes \dots \otimes \mu_N^n) \leq (N-1) \log(n),
	$
	completing the proof of~(i).
	
	\vspace{.3em}
	
	\noindent (ii) Here we define $n = \lfloor S_\varepsilon^{\frac{1}{2\alpha}}\rfloor$, then~\eqref{eq:transportDiff} and~\eqref{eq:roundingSmooth2} yield
	\[
	\int c \,d\pi - \int c \,d\pi^* \leq \frac{2 BC \varrho(S_\varepsilon)}{S_\varepsilon}\leq \frac{8 BC}{S_\varepsilon}
	\]
	while (recall $N=2$) Lemma~\ref{lem:boundfdiv}(i) yields
	$
	D_f(\tilde{\pi}, \mu_1^n \otimes \mu_2^n) \leq \varphi(n)
	$ 
	and thus
	\[
	\varepsilon D_f(\pi, P) \leq \varepsilon \varphi(n) \leq \frac{\varepsilon \varphi(S_\varepsilon^{\frac{1}{2\alpha}}) S_\varepsilon}{S_\varepsilon} = \frac{1}{S_\varepsilon},
	\]
		completing the proof.
\end{proof}

Similarly as in \cref{rk:lipschitzBetterConst}, the asymptotic constant in \cref{thm:second} can be improved from~8 to~2.

\begin{remark}[Relaxing $C^{2}$ condition]\label{rk:lessRegular}
\Cref{thm:second} immediately extends to slightly less regular costs: if $(c_n)_{n \in \mathbb{N}}$ is a sequence of cost functions satisfying the assumptions of \cref{thm:second} and $\lim_{n\to\infty}\|c_n - c\|_\infty = 0$ for some $c : \mathbb{R}^d \rightarrow \mathbb{R}$, then 
\[
\OT_{f, \varepsilon}(c) - \OT(c) \leq 2 \|c_n - c\|_\infty + \OT_{f, \varepsilon}(c^n) - \OT(c^n)
\]
as both $\OT_{f, \varepsilon}$ and $\OT$ are $1$-Lipschitz with respect to $\|\cdot\|_\infty$, so that \cref{thm:second} applies to $c$ as well.
\end{remark} 

We also have the following analogue of \cref{ex:explicitBounds}.

\begin{example}[$L^{\rho}$ regularization]\label{ex:explicitBoundSecond}
  For the $L^{\rho}$ regularization $f(x)=\frac{1}{\rho}(x^{\rho}-1)$ with $\rho>1$, \cref{thm:second}\,(ii) implies that for all $\eps\in(0,1]$,
	\[
	 \OT_{f, \varepsilon} - \OT \leq K\eps^{\frac{1}{(\rho-1)\beta+1}}, \quad K:= (8 BC + 1)/\rho,
	\]
	by the same algebra as in \cref{ex:explicitBounds}. (Of course, $\beta$ now has a different definition).
\end{example}

\begin{remark}[Comparison with \cref{thm:lip}] \label{rk:comparison}
  Let $N=2$ for simplicity. As any quantization of the coupling~$\pi^{*}$ induces quantizations for its marginals, it is clear that $\alpha \leq \alpha_{2}$. In the best case, we have $\alpha = \alpha_{2}$, and then \cref{thm:second} yields an improvement of $1/2$ over \cref{thm:lip}. Note that $\alpha = \alpha_{2}$ will typically be the case if $d_{1}=d_{2}=:d$ and the support of~$\pi^{*}$ is also $d$-dimensional---more on this in a moment.
  
  On the flip side, as \cref{thm:second} implicitly quantizes all the marginals, there is no immediate benefit to having a faster rate for one marginal as in \cref{rk:missingQuantRate}. Thus there are situations where \cref{thm:lip} actually yields a better rate, especially if $d_{1}> 2d_{2}$. But of course, $d_{1}=d_{2}$ is the most important setting.
\end{remark}

To obtain a good result from \cref{thm:second}, we need to know that $\OT$ admits an optimal transport~$\pi^*$ satisfying ${\sf quant}_{2}(C,\alpha)$ for some good~$\alpha$. Indeed, ${\sf quant}_{2}(C,\alpha)$ holds trivially for $1/\alpha=d_{1}+\cdots+d_{N}$ (under a moment condition), but that does not yield the desired improvement over \cref{thm:lip}. On the other hand, suppose that $\pi^{*}$ is given by a Lipschitz transport map over $X_{1}$, then $\pi^{*}$ inherits the quantization rate from~$\mu_{1}$, so that $1/\alpha = d_{1}$. The existence of such a map has been studied intensely in the regularity theory of optimal transport, see \cite{Caffarelli.92,Caffarelli.96} and the literature thereafter. However, the conditions are known to be very restrictive \cite{Loeper.09,MaTrudingerWang.05},
 and clearly a Lipschitz map can almost never be expected for unbounded marginals. On the other hand, as emphasized in~\cite{McCannPassWarren.12}, a lower dimensional structure does not require a transport map at all.
 
 In the following, we provide some results for $N=2$ marginals, and remark briefly on the multi-marginal case. Generally speaking, any result on the structure of optimal transports can be combined with \cref{thm:second}. The next result covers the most important example---the quadratic cost defining 2-Wasserstein distance---under a minimal condition on the marginals (which includes many situations where no coupling is given by a map).

\begin{lemma}\label{le:quadCostQuantRate}
  Consider $c(x,y)=|x-y|^{2}$ on $\R^{d}\times\R^{d}$ with marginals $\mu_{1},\mu_{2}\in\cP_{2+\delta}(\R^{d})$ for some $\delta>0$. Then any optimal transport satisfies ${\sf quant}_{2}(C,1/d)$ for some $C>0$.
\end{lemma}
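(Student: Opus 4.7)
The plan is to exploit the cyclical monotonicity of any optimizer of $\OT$ for the quadratic cost via Minty's trick~\cite{Minty.62}, reducing the quantization of $\pi^{*}$ on $\R^{d}\times\R^{d}$ to that of a well-chosen marginal on $\R^{d}$, for which \cref{rk:quantizationRate} delivers the rate $1/d$.

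First I would record the standard fact that for $c(x,y)=|x-y|^{2}$, minimizing $\int c\,d\pi$ is equivalent to maximizing $\int x\cdot y\,d\pi$, so any optimizer $\pi^{*}$ is concentrated on a monotone set $\Gamma\subset\R^{d}\times\R^{d}$: $(x-x')\cdot(y-y')\geq 0$ for all $(x,y),(x',y')\in\Gamma$. The orthogonal change of coordinates $\Psi(x,y):=\bigl((x+y)/\sqrt{2},\,(y-x)/\sqrt{2}\bigr)=:(u,v)$ yields the identity
\[
|u-u'|^{2}-|v-v'|^{2}=2(x-x')\cdot(y-y'),
\]
so on $\Psi(\Gamma)$ we have $|v-v'|\leq|u-u'|$; in particular, $\Gamma\ni(x,y)\mapsto u$ is injective with $\sqrt{2}$-Lipschitz inverse. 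By Kirszbraun's extension theorem the corresponding $v$-map extends to a $1$-Lipschitz function $\tilde{\Phi}:\R^{d}\to\R^{d}$, and I set
\[
T(u):=\bigl((u-\tilde{\Phi}(u))/\sqrt{2},\,(u+\tilde{\Phi}(u))/\sqrt{2}\bigr),
\]
which is $\sqrt{2}$-Lipschitz on $\R^{d}$ and coincides with $\Psi^{-1}$ on $\Psi(\Gamma)$.

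Next, let $\eta$ be the pushforward of $\pi^{*}$ under $(x,y)\mapsto(x+y)/\sqrt{2}$. Since $\mu_{1},\mu_{2}\in\mathcal{P}_{2+\delta}(\R^{d})$, a triangle inequality gives $\eta\in\mathcal{P}_{2+\delta}(\R^{d})$, so \cref{rk:quantizationRate} furnishes $\eta^{n}\in\mathcal{P}^{n}(\R^{d})$ with $W_{2}(\eta^{n},\eta)\leq C_{1}n^{-1/d}$. I take as the candidate quantizer $\pi^{n}:=T_{\#}\eta^{n}\in\mathcal{P}^{n}(\R^{d}\times\R^{d})$; since $\eta$ is concentrated on $\Psi(\Gamma)$ where $T=\Psi^{-1}$, we have $T_{\#}\eta=\pi^{*}$. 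Pushing an optimal $W_{2}$-coupling of $(\eta^{n},\eta)$ through $T\otimes T$ and using the Lipschitz bound,
\[
W_{2}(\pi^{n},\pi^{*})\leq\sqrt{2}\,W_{2}(\eta^{n},\eta)\leq\sqrt{2}\,C_{1}\,n^{-1/d},
\]
which is the claim with $C:=\sqrt{2}\,C_{1}$.

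The only delicate point is the lifting step: the set $\Psi(\Gamma)$ may be neither closed nor all of $\R^{d}$, yet I need a globally defined Lipschitz map on $\R^{d}$ in order to push forward an arbitrary discrete quantizer of $\eta$. Kirszbraun's theorem handles this cleanly, and the identity $T_{\#}\eta=\pi^{*}$ rests only on $\Psi(\Gamma)$ being an $\eta$-full set. The moment bound on $\eta$ and the invocation of \cref{rk:quantizationRate} are then routine.
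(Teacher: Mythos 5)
Your proposal is correct and follows essentially the same route as the paper: rotate coordinates via $(u,v)$, quantize the pushforward $\eta$ of $\pi^{*}$ onto the $u$-coordinate using the $(2+\delta)$-moment and \cref{rk:quantizationRate}, use monotonicity of the support (Minty's trick) to get $|v-v'|\leq|u-u'|$, extend the resulting Lipschitz inverse by Kirszbraun, and push the quantizers forward with the $\sqrt{2}$-Lipschitz bound. The only cosmetic difference is that the paper phrases the reduction as an orthogonal projection onto the diagonal and extends the full inverse $\ell$ directly, whereas you extend only the $v$-component; the two are equivalent.
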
 

\begin{proof}
  Let $\Delta=\{(x,x): x\in \R^{d}\}$ be the diagonal and $\proj^{\Delta}: \R^{2d}\to \Delta$ the Euclidean orthogonal projection. Let $\pi\in\Pi(\mu_{1},\mu_{2})$ be an optimal transport, then $\pi\in\cP_{2+\delta}(\R^{2d})$ due to the assumption on the marginals. Define the pushforward measure 
  \[
    \eta := \proj^{\Delta}_{\#} \pi
  \]
  which is concentrated on~$\Delta$; we claim that $\eta$ satisfies ${\sf quant}_{2}(C,1/d)$. Consider the rotated coordinates $(u,v)$ given by
  \[
    u=\frac{x+y}{\sqrt{2}}, \quad v=\frac{x-y}{\sqrt{2}}
  \]
  in which $\Delta=\{(u,0): u\in\R^{d}\}$ and $\proj^{\Delta}$ can be written as $(u,v)\mapsto(u,0)$. Thus $\eta$ can be seen as a measure on $\R^{d}$ and with that identification,
  \[
    \int |u|^{2+\delta}\,d\eta = \int |(u,v)|^{2+\delta}\,d\pi = \int |(x,y)|^{2+\delta}\,d\pi<\infty.
  \]
  By \cref{rk:quantizationRate}, $\eta\in\cP_{2+\delta}(\R^{n})$ implies that~$\eta$ satisfies ${\sf quant}_{2}(C,1/d)$. %
  
  To show the same rate for $\pi$, we use Minty's trick~\cite{Minty.62} along the lines of~\cite{AlbertiAmbrosio.99}. Recall that the support $\Gamma :=\spt \pi$ is $c$-cyclically monotone (e.g.,~\cite{Villani.09}), which for quadratic cost means
  \[
    \br{x'-x, y'-y} \geq0 , \quad (x,y),(x',y')\in\Gamma.
  \]
  In the rotated coordinates, this implies that 
  \[
    |v'-v| \leq |u'-u|, \quad (u,v),(u',v')\in\Gamma.
  \]  
  In particular, $u=u'$ implies $v=v'$, meaning that $\proj^{\Delta}$ admits an inverse map $\ell: \proj^{\Delta}(\Gamma)\to\Gamma$, $(u, 0) \mapsto (u, v)$ and moreover $\ell$ is $\sqrt{2}$-Lipschitz. By Kirszbraun's theorem, we can extend $\ell$ to a $\sqrt{2}$-Lipschitz map $\Delta\to\R^{d}\times\R^{d}$, still denoted~$\ell$. Note that $\pi=\ell_{\#} \eta$ and any quantization of $\eta$ on~$\Delta$ pushes forward to a quantization of~$\pi$. In view of the $\sqrt{2}$-Lipschitz property, we conclude that $\pi$ satisfies ${\sf quant}_{2}(\sqrt{2} C,1/d)$.
\end{proof} 

The following combines \cref{le:quadCostQuantRate} with \cref{thm:second} and \cref{ex:explicitBoundSecond}.

\begin{corollary}[Quadratic cost]\label{co:quadratic}
  Consider $c(x,y)=|x-y|^{2}$ on $\R^{d}\times\R^{d}$ with marginals $\mu_{1},\mu_{2}\in\cP_{2+\delta}(\R^{d})$ for some $\delta>0$. 
	\begin{itemize}
		\item[(i)] Let $f(x) = x \log(x)$. There exists $K>0$ such that 
		\[
		\OT_{f, \varepsilon} - \OT \leq \frac{d}{2} \varepsilon \log\left(\frac{1}{\varepsilon}\right) + K \varepsilon, \qquad \varepsilon \in (0, 1].
		\]		
		\item[(ii)] Let $f(x)=\frac{1}{\rho}(x^{\rho}-1)$ with $\rho>1$. There exists $K>0$ such that 
	  \[
	  \OT_{f, \varepsilon} - \OT \leq K\eps^{\frac{1}{(\rho-1)d/2+1}}, \qquad \varepsilon \in (0, 1].
	  \]
	\end{itemize}
\end{corollary}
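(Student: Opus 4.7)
The plan is to combine the already-established Lemma \ref{le:quadCostQuantRate} with Theorem \ref{thm:second} for part (i) and with Example \ref{ex:explicitBoundSecond} for part (ii); the corollary is essentially a packaging of these three results for the specific quadratic cost.

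First I would verify the hypotheses of Theorem \ref{thm:second}. The cost $c(x,y)=|x-y|^{2}$ is smooth on $\R^{d}\times\R^{d}$, with constant Hessian $\left(\begin{smallmatrix} 2I & -2I \\ -2I & 2I \end{smallmatrix}\right)$ whose largest eigenvalue equals $4$, so the bound \eqref{eq:bddSecondDeriv} holds with $B=4$. Under the moment assumption $\mu_{1},\mu_{2}\in\cP_{2+\delta}(\R^{d})$ the unregularized problem $\OT$ admits an optimal coupling $\pi^{*}$, and Lemma \ref{le:quadCostQuantRate} then supplies a constant $C>0$ such that $\pi^{*}$ satisfies ${\sf quant}_{2}(C,1/d)$. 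This is the decisive input: it upgrades the naive rate $1/(2d)$ that one would obtain by viewing $\pi^{*}$ as a generic measure on $\R^{2d}$ to the sharp rate $1/d$ that matches the dimension of a single marginal.

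For (i) I would apply Theorem \ref{thm:second}(i) with $N=2$ and $\alpha=1/d$, reading off
\[
\OT_{f,\eps} - \OT \;\leq\; \frac{N-1}{2\alpha}\,\eps\log\!\left(\frac{1}{\eps}\right) + 8BC\,\eps \;=\; \frac{d}{2}\,\eps\log\!\left(\frac{1}{\eps}\right) + K\eps
\]
with $K=32C$. For (ii), with $\alpha=1/d$ the parameter of Example \ref{ex:explicitBoundSecond} becomes $\beta=1/(2\alpha)=d/2$, and the stated bound $\OT_{f,\eps}-\OT\leq K\eps^{1/((\rho-1)d/2+1)}$ follows immediately with $K=(8BC+1)/\rho$.

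Since all three ingredients are already proven, there is no substantive obstacle; the only mild point requiring care is the appeal to Theorem \ref{thm:second}(ii) for $f(x)=\frac{1}{\rho}(x^{\rho}-1)$, where concavity of $\varphi$ is not immediate for all $\rho>1$. This is precisely the bookkeeping handled in Example \ref{ex:explicitBoundSecond} via the same majorization trick as in Example \ref{ex:explicitBounds}: one replaces $\varphi(x)=\frac{1}{\rho}x^{\rho-1}-\frac{1}{\rho x}$ by the simpler majorant $\psi(x)=\frac{1}{\rho}x^{\rho-1}$, yielding an explicitly invertible $\tilde g(x)=\frac{1}{\rho}x^{(\rho-1)\beta+1}$ and the closed-form rate above.
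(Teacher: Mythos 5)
Your argument is exactly the paper's proof: \cref{co:quadratic} is obtained there by the one-line combination of \cref{le:quadCostQuantRate} (which gives ${\sf quant}_{2}(C,1/d)$ for an optimal $\pi^{*}$ under the $(2+\delta)$-moment assumption), \cref{thm:second} with $B=4$ and $\alpha=1/d$, and \cref{ex:explicitBoundSecond} with $\beta=d/2$, so your write-up matches it step for step. One minor correction to your closing remark: the majorization $\varphi\leq\psi$ in \cref{ex:explicitBounds}/\cref{ex:explicitBoundSecond} is there only to make $\tilde f_{\inv}$ explicit, not to repair concavity of $\varphi$, which is a standing hypothesis of \cref{thm:second}\,(ii) used in \cref{lem:boundfdiv}\,(i) (and holds for $1<\rho\leq 2$); the paper treats this point the same way, so nothing substantive is lost.
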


Next, we aim to generalize \cref{le:quadCostQuantRate} from quadratic to more general costs. Following~\cite{McCannPassWarren.12}, the basic idea is that a fairly generic cost is locally equivalent to a perturbation of the quadratic cost after a change of coordinates. Let $X_1,X_2 \subset \mathbb{R}^{d}$ be convex and $c\in \cC^{2}(X)$. We say that $c$ is \emph{nondegenerate} if $D_{xy}^{2}c(x,y)$ is invertible for all $(x,y)\in X$. Here $D_{xy}^{2}c(x,y)$ denotes the $d\times d$ matrix $[\partial^{2}_{x_{i}y_{j}}c(x,y)]_{1\leq i,j\leq d}$. We follow the terminology of~\cite{McCannPassWarren.12}; the condition is called~(A2) in~\cite{MaTrudingerWang.05} while~\cite{CarlierPegonTamanini.22} calls such~$c$ infinitesimally twisted.

If the support can be covered by finitely many such local coordinate changes, we obtain the same quantization rate as in the quadratic case. In particular, this holds for compact support.

\begin{lemma}\label{le:nondegenerateCompact}
   Let $X_1,X_2 \subset \mathbb{R}^{d}$ be convex and let $c\in \cC^{2}(X)$ be nondegenerate. If $\mu_{1},\mu_{2}$ are compactly supported, then any optimal transport satisfies ${\sf quant}_{2}(C,1/d)$ for some $C>0$.
\end{lemma}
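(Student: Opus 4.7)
The plan is to reduce to the quadratic case of \cref{le:quadCostQuantRate} via a local linearization of the cost. Since both marginals are compactly supported, $\Gamma:=\spt\pi^{*}$ is compact in $\mathbb{R}^{2d}$. It suffices to cover $\Gamma$ with finitely many Borel pieces $\Gamma_{1},\dots,\Gamma_{m}$ such that each restriction $\pi^{*}|_{\Gamma_{k}}$ (normalized) satisfies ${\sf quant}_{2}(C_{k},1/d)$: concatenating the finite-support approximations across pieces then yields ${\sf quant}_{2}(C,1/d)$ for $\pi^{*}$ with a constant that is $m$ times larger.

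The key local step is as follows. Fix $(x_{0},y_{0})\in\Gamma$ and set $A:=D^{2}_{xy}c(x_{0},y_{0})$, which is invertible by nondegeneracy. Consider the linear change of coordinates $\Psi(x,y):=(x,-Ay)=:(\xi,q)$, which is bi-Lipschitz on $\mathbb{R}^{2d}$ with constants depending only on $\|A\|$ and $\|A^{-1}\|$. Expanding the pairwise $c$-cyclical monotonicity inequality $c(x,y)+c(x',y')\leq c(x,y')+c(x',y)$ to second order around $(x_{0},y_{0})$, all first-order terms and the diagonal second-order terms cancel, leaving $(x-x')^{T}A(y-y')$ as the leading contribution. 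By continuity of $D^{2}c$, for any $\varepsilon>0$ there is $r>0$ such that on $B_{r}(x_{0},y_{0})$ the remainder is bounded in absolute value by $\varepsilon(|\xi-\xi'|^{2}+|q-q'|^{2})$, yielding the approximate monotonicity
\[
\langle\xi-\xi',\,q-q'\rangle\geq -\varepsilon\bigl(|\xi-\xi'|^{2}+|q-q'|^{2}\bigr),\qquad(\xi,q),(\xi',q')\in\Psi(B_{r}(x_{0},y_{0})\cap\Gamma).
\]
Rotating to $u=(\xi+q)/\sqrt{2}$, $v=(\xi-q)/\sqrt{2}$, this becomes $|v-v'|^{2}\leq\tfrac{1+2\varepsilon}{1-2\varepsilon}|u-u'|^{2}$, so for $\varepsilon<1/2$ the map $u\mapsto v$ is Lipschitz on the image, exactly as in the proof of \cref{le:quadCostQuantRate}. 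A Kirszbraun extension then produces a Lipschitz parametrization of $\Psi(B_{r}(x_{0},y_{0})\cap\Gamma)$ by a bounded subset of $\mathbb{R}^{d}$, and composing with $\Psi^{-1}$ gives such a parametrization for $B_{r}(x_{0},y_{0})\cap\Gamma$ itself.

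Hence $\pi^{*}$ restricted to $B_{r}(x_{0},y_{0})\cap\Gamma$ is the Lipschitz pushforward of a compactly supported probability on $\mathbb{R}^{d}$; by \cref{rk:quantizationRate} (applicable since compact support yields all finite moments), this restriction satisfies ${\sf quant}_{2}(\cdot,1/d)$. Covering the compact set $\Gamma$ by finitely many such balls, each with radius chosen small enough for the bound $\varepsilon<1/2$ to hold at its center, and partitioning $\Gamma$ into disjoint Borel pieces subordinate to this cover, completes the argument.

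The main obstacle is the quantitative control of the Taylor remainder: $c$-cyclical monotonicity reduces to exact classical monotonicity in the linearized coordinates only for the quadratic cost, whereas for a general nondegenerate $c$ it yields only approximate monotonicity. The $C^{2}$ regularity of $c$ together with compactness of $\Gamma$ provides a uniform modulus of continuity for $D^{2}c$, allowing the patches to be chosen small enough that the error cannot obstruct the Minty--Kirszbraun step; meanwhile, nondegeneracy combined with compactness guarantees that $\|A^{-1}\|$ stays bounded across the finite cover, so that the Lipschitz constants of $\Psi$ and $\Psi^{-1}$ do not degenerate.
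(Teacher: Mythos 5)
Your argument is essentially the paper's own proof, which establishes \cref{le:nondegenerateCompact} via Steps~1 and~2 of the proof of \cref{le:uniformlyNondegcosts}: a local linearization through $D^2_{xy}c(x_0,y_0)$, $c$-cyclical monotonicity yielding (approximate) monotonicity in rotated coordinates, the Minty--Kirszbraun Lipschitz parametrization, quantization of the projected measure, and a finite cover with the sum/allocation argument. The only differences are cosmetic (balls versus cubes, a continuity estimate on the mixed second difference in place of the auxiliary function $G$, and slightly loose but harmless constant bookkeeping in the patching step).
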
 

For a proof, see Steps~1 and~2 in the proof of \cref{le:uniformlyNondegcosts} below. Next, we address the unbounded case; here we assume that nondegeneracy holds in a uniform sense (which is automatic in the compact case) and achieve a rate arbitrarily close to $1/d$, under sufficient integrability. The proof is a combination of the proofs of \cref{le:quadCostQuantRate} and \cite[Theorem~1.1]{McCannPassWarren.12} with a cut-off argument.
We denote by~$\|M\|$ the operator norm of the matrix~$M$.

\begin{lemma}\label{le:uniformlyNondegcosts}
   Let $X_1,X_2 \subset \mathbb{R}^{d}$ be convex and let $c\in \cC^{2}(X)$ be nondegenerate. Suppose that $D_{xy}^{2}c(x,y)$ is uniformly continuous and $\|D_{xy}^{2}c\|$, $\|(D_{xy}^{2}c)^{-1}\|$ are bounded on~$X$. Let $d'>d$. If $\mu_{1},\mu_{2}\in\cP_{q}(\R^{d})$ for $q:=2\frac{d'+d}{d'-d}$, then any optimal transport satisfies ${\sf quant}_{2}(C,1/d')$ for some $C>0$.
\end{lemma}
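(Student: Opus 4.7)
The plan is to extend the Minty-trick argument from \cref{le:quadCostQuantRate} to a general nondegenerate cost through a local Taylor expansion of $c$, then combine the resulting local Lipschitz-graph structure on $\spt(\pi)$ with a cutoff at a radius $R$ whose value will be balanced against the quantization error.

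First I would establish a uniform local bi-Lipschitz structure on $\Gamma := \spt(\pi)$. For any base point $(x_0, y_0)\in X$, set $A := D^2_{xy}c(x_0,y_0)$. The exact identity
\[
c(x_1,y_1)+c(x_2,y_2)-c(x_1,y_2)-c(x_2,y_1) = \int_0^1\!\!\int_0^1 (x_2{-}x_1)^\top D^2_{xy}c(x_s,y_t)(y_2{-}y_1)\, ds\, dt
\]
with $x_s = x_1+s(x_2-x_1)$, $y_t = y_1+t(y_2-y_1)$, combined with $c$-cyclical monotonicity of $\Gamma$, gives on any box $B_r(x_0) \times B_r(y_0)$ the linearized inequality
\[
(x_2-x_1)^\top A(y_2-y_1) \leq \omega(2r)\,|x_2-x_1|\,|y_2-y_1|,
\]
where $\omega$ is a uniform modulus of continuity of $D^2_{xy}c$. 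Substituting $\tilde y := -A(y-y_0)$ and using $|y_2-y_1| \leq M|\tilde y_2-\tilde y_1|$ with $M := \sup\|(D^2_{xy}c)^{-1}\|$, this rewrites as
\[
\langle x_2-x_1,\tilde y_2-\tilde y_1\rangle \geq -M\omega(2r)\,|x_2-x_1|\,|\tilde y_2-\tilde y_1|.
\]
Because $M$, $\omega$ and $\|A\|$ are globally controlled by assumption, I can pick one radius $r>0$, independent of $(x_0,y_0)$, with $M\omega(2r) \leq 1/2$. The Minty rotation $u := (x+\tilde y)/\sqrt 2$, $v := (x-\tilde y)/\sqrt 2$ then yields $|v_2-v_1| \leq \sqrt{3}\,|u_2-u_1|$ exactly as in the proof of \cref{le:quadCostQuantRate} (AM--GM plus the isometry of rotations), and unwinding the linear change of variables delivers a bi-Lipschitz parametrization of $\Gamma \cap (B_r(x_0) \times B_r(y_0))$ by a subset of $\R^d$ with constants depending only on $M$, $\|A\|$ and~$\omega$. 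Applied to a finite covering, this observation simultaneously yields \cref{le:nondegenerateCompact}.

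Next I would introduce the cutoff. Write $\pi = \pi_R + \pi^c_R$ with $\pi_R := \pi|_{B_R(0)\times B_R(0)}$. By Chebyshev, $\int_{|(x,y)|>R}|(x,y)|^2\, d\pi \leq R^{2-q}\int|(x,y)|^q\, d\pi$, so the tail $\pi^c_R$ can be collapsed to a point mass at the origin at $W_2^2$-cost of order $R^{2-q}$. For the bulk $\pi_R$, I would cover $B_R(0)^2$ by $N \leq (R/r)^{2d}$ boxes from the first step (with $r$ fixed), distribute $n$ quantization points uniformly across the boxes, and quantize inside each box via its bi-Lipschitz $\R^d$-parametrization using \cref{rk:quantizationRate}; this contributes $W_2^2 \leq C r^2 (n/N)^{-2/d}$ per unit mass on each box, so summing and using additivity of $W_2^2$ for block couplings gives $W_2^2(\pi_R^n, \pi_R) \leq C' r^2 N^{2/d} n^{-2/d} \leq C'' R^{4} n^{-2/d}$. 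Adding the two pieces and balancing $R^4 n^{-2/d}$ against $R^{2-q}$ yields $R = n^{2/(d(q+2))}$ and
\[
W_2^2(\pi^{n+1},\pi) \leq C''' n^{-2(q-2)/(d(q+2))}.
\]
The exponent equals $2/d'$ precisely when $q(d'-d) = 2(d+d')$, which is the stated moment condition; small $n$ are absorbed by the trivial bound $W_2(\pi^1,\pi)\leq (\int|(x,y)|^2\,d\pi)^{1/2}$ via an increase of the constant.

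The hard part will be the first step: securing a single radius $r>0$ and a uniform Lipschitz constant across the whole (possibly unbounded) domain~$X$ requires carefully exploiting both the uniform continuity of $D^2_{xy}c$ and the two-sided uniform bound on $D^2_{xy}c$. Once this uniformity is secured, the cover-and-distribute balancing in the second step is routine bookkeeping with the Chebyshev tail estimate.
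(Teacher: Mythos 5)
Your proposal is correct and follows essentially the same route as the paper's proof: a uniform local linearization of the cost at each base point (your double-integral identity with $A=D^2_{xy}c(x_0,y_0)$ is just an unpacked version of the paper's auxiliary function $G$), the Minty rotation giving $|\Delta v|\leq\sqrt3|\Delta u|$ and hence a uniform local quantization rate $1/d$, then a cover of a cube of radius $R$ by $(R/r)^{2d}$ boxes with points split evenly, and finally a Chebyshev cutoff of the tail by a single atom with $R$ balanced against $n$, which yields exactly the exponent arithmetic $q=2\frac{d'+d}{d'-d}$. The only glossed details (Kirszbraun extension of the local inverse before pushing the quantizers back, and checking $n\geq N$ for the allocation) are routine and handled the same way in the paper.
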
 

\begin{proof}
  Let $\pi$ be an optimal transport. Whenever a subprobability $\nu$ is given, we denote by $\widetilde\nu=\nu/\nu(X)$ its normalized measure.

  \vspace{.3em}
  
  \noindent \emph{Step~1}. Consider a cube $Q=([-r,r]^{2d}+\{(x_{0},y_{0})\})\cap X$ centered at $(x_{0},y_{0}) \in \spt\pi$. We show that for $r$ sufficiently small, $\widetilde{\pi|_{Q}}$ satisfies ${\sf quant}_{2}(C,1/d)$ with a constant $C$ independent of $(x_{0},y_{0})$.
  Let $M:=D_{xy}^{2}c(x_{0},y_{0})\in\R^{d\times d}$ and $G(x, y) := -c(x, -M^{-1}y) - x\cdot y$. Then
  \begin{align*}
  D_{xy}^2 G(x, y) &= D^2_{xy} c(x, -M^{-1} y) M^{-1} - \1_{n} \\ &= D^2_{xy} c(x, -M^{-1}y) M^{-1} - D^2_{xy} c(x_0, y_0) M^{-1}
  \end{align*}
  and hence
  \[
  \|D_{xy}^{2}G(x,y)\| \leq \|M^{-1}\| \|D^2_{xy} c(x, -M^{-1}y) - D^2_{xy} c(x_0, y_0)\|.
  \]
  As $D^2_{xy} c$ is uniformly continuous and $\|(D_{xy}^2 c)^{-1}\|$ is uniformly bounded, we can thus choose $r \in (0, 1)$ independent of $(x_0, y_0)$ such that $\|D_{xy}^{2}G(x,y)\| \leq \frac{1}{2}$ for all $(x,y)\in\R^{d}\times\R^{d}$ with $(x, -M^{-1}y) \in Q$.
  
  Consider $(x,y),(x',y')$ such that $(x, -M^{-1}y), (x', -M^{-1}y') \in Q \cap \spt \pi$. Then the $c$-cyclical monotonicity of $\spt \pi$ yields
  \[
    c(x, -M^{-1}y) + (x', -M^{-1}y') \leq c(x, -M^{-1}y) + c(x', -M^{-1}y')
  \]
  or equivalently 
  \begin{equation}\label{eq:ccyclical}
  x\cdot y + G(x, y) + x' y'+G(x', y') \geq x \cdot y' + G(x, y') + x' \cdot y + G(x', y).
  \end{equation}
  Next, we use a second change of coordinates
  \[
  u=\frac{x+y}{\sqrt{2}}, \quad v=\frac{x-y}{\sqrt{2}}.
  \]
  Closely following the proof of \cite[Theorem~1.2]{McCannPassWarren.12}, using \eqref{eq:ccyclical} with $\Delta x := x' - x$, $\Delta y := y' - y$, $\Delta u := u' - u$, $\Delta v := v' - v$  leads to
  \[
  \Delta x \cdot \Delta y + \Delta x \cdot \int_0^1 \int_0^1 D^{2}_{xy}G(x + s \Delta x, y + t \Delta y)\Delta y \, ds \, dt \geq 0
  \]
  and hence $\Delta x \cdot \Delta y \geq -\frac{1}{2} |\Delta x| |\Delta y|$ as $\|D_{xy}^{2}G\| \leq \frac{1}{2}$ along the integration domain. Noting that $\Delta y \sqrt{2} = \Delta u + \Delta v$ and $\Delta x \sqrt{2} = \Delta u - \Delta v$, we deduce
  \begin{align*}
  |\Delta u|^2 - |\Delta v|^2 = 2 \Delta x \cdot \Delta y &\geq - |\Delta x| |\Delta y| \\
  &\geq - \frac{1}{2} (|\Delta x|^2 + |\Delta y|^2) = - \frac{1}{2} (|\Delta u|^2 + |\Delta v|^2)
  \end{align*}
  and thus
  \begin{equation}\label{eq:lipinverse}
  |\Delta v| \leq \sqrt{3} |\Delta u|.
  \end{equation}
  Consider the composition $a=a_{3}\circ a_2 \circ a_1$ of the linear maps 
  \[a_1 : (x, -M^{-1} y) \mapsto (x, y), \quad a_{2}: (x,y)\mapsto(u,v), \quad a_3: (u, v) \mapsto u.
  \]
  Clearly, the image $I=a(\R^{2d})$ is a $d$-dimensional linear subspace.
  Defining $\eta := a_{\#} \widetilde{\pi|_{Q}}$, we see that $\spt \eta$ is a bounded subset of $I$. Its diameter admits a bound depending only on~$r$ and the Lipschitz constant of $a$, and the latter is independent of $(x_{0},y_{0})$ as~$\|M\|=\|D_{xy}c(x_{0},y_{0})\|$ is uniformly bounded. Recall from \cref{rk:quantizationRate} that a measure on $\R^{d}$ with bounded support satisfies ${\sf quant}_2(C_{0}, 1/d)$ with a constant~$C_{0}$ depending only on~$d$ and the diameter of the support (note that the diameter bounds any moment). As a result, $\eta$ satisfies ${\sf quant}_2(C_{0}, 1/d)$ with a constant~$C_{0}$ independent of $(x_0, y_0)$.
  
  The map $a$ admits a Lipschitz inverse $\ell: a(Q \cap \spt \pi)\to Q \cap \spt \pi$, with a Lipschitz constant~$L$ independent of $(x_0, y_0)$ due to the boundedness of $\|(D_{xy}^2 c)^{-1}\|$ and~\eqref{eq:lipinverse}. Again, by Kirszbraun's theorem, $\ell$ extends to a Lipschitz map $\ell: I\to \R^{d}\times\R^{d}$ with the same Lipschitz constant. As $\widetilde{\pi|_{Q}} = \ell_{\#} \eta$, we deduce that $\widetilde{\pi|_{Q}}$ satisfies ${\sf quant}_2(C, 1/d)$ for $C=LC_{0}$.

  \vspace{.3em}
  
  \noindent \emph{Step~2}.  We start with a general observation about sums. Let $\nu_{1},\dots,\nu_{m}$ be subprobabilities with a cumulative mass of at most one and suppose that each $\widetilde{\nu_{i}}$ satisfies ${\sf quant}_{2}(C,\alpha)$ for $n\geq1$. Consider the quantization problem for the sum $\nu=\sum_{i=1}^{m}\nu_{i}$, which can be seen as the convex combination $\sum_{i=1}^{m} \nu_{i}(X) \widetilde{\nu_{i}}$ of probability measures (and the zero measure, if necessary). Noting that given $n=km$ points, we can allocate~$k$ points to each of the $\widetilde{\nu_{i}}$, it is easy to see that $\widetilde\nu$ satisfies ${\sf quant}_{2}(m^{\alpha}C,\alpha)$ for all $n\in \{m,2m,\cdots\}$, and thus for all $n\geq m$ after increasing the constant $C$.
	
  For $N\in\N$, consider $R=Nr$ and the cube $Q_{R}=[-R,R]^{2d}$, which can be divided into $m:=N^{2d}$ small cubes of the type in Step~1. Combining Step~1 with the observation about sums, we see that $\widetilde{\pi|_{Q_{R}}}$ satisfies ${\sf quant}_{2}((R/r)^{2}C ,1/d)$ for $n\geq (R/r)^{2d}$. 
  
  We note that if the marginals are compactly supported, $Q_{R}$ contains $\spt\pi$ for $R$ sufficiently large, so that $\pi$ satisfies ${\sf quant}_{2}(C ,1/d)$ after increasing~$C$. For the noncompact case, we use the following cut-off.

  \vspace{.3em}
  
  \noindent \emph{Step~3}.  For $n\geq1$, choose $R=R(n)$ as
	\[
	  R(n) := r \left\lfloor n^{\frac{1}{d}-\frac{1}{d'}}\right\rfloor^{1/2}.
	\]
	Note $\lim_{n\to\infty}R(n)=\infty$ and $n\geq (R(n)/r)^{2d}$ and $(R(n)/r)^{2}n^{-1/d}\leq n^{-1/d'}$. Writing $\pi_{n} :=\pi|_{Q_{R(n)}}$, the above shows that there exist $\nu_{n}\in\cP^{n}(\R^{2d})$ such that $W_{2}(\nu_{n},\widetilde{\pi_{n}})\leq C n^{-1/d'}$.
	On the other hand, consider $\pi-\pi_{n}$, which is supported outside $[-R(n),R(n)]^{2d}$. As a consequence,
  \[
   \int |z|^{2} \,d(\pi-\pi_{n})\leq R(n)^{-\gamma}\int |z|^{2+\gamma} \,d\pi
  \]	
  for any $\gamma\geq 0$. Choose $\gamma:=\frac{4d}{d'-d}=\frac{4}{d'}(\frac{1}{d}-\frac{1}{d'})^{-1}$; then $R(n)^{-\gamma}\leq C' n^{-2/d'}$ for a constant $C'>0$ and as $2+\gamma=2\frac{d'+d}{d'-d}$, the integral is finite by our assumption on the marginals. Quantizing $\pi-\pi_{n}$ by a single point mass at the origin,  we then see with the result for $\pi_{n}$ that $\pi$ satisfies ${\sf quant}_{2}(C ,1/d')$ for a (different) constant $C$.
\end{proof} 

\Cref{le:nondegenerateCompact} and \cref{le:uniformlyNondegcosts} have immediate corollaries similar to \cref{co:quadratic}; we omit the statements for brevity.

The nondegeneracy condition can be extended to the multi-marginal transport problem, and  is used in \cite[Theorem~2.2]{Pass.15} to bound the dimension of the support of an optimal transport. However, as noted by the author, the condition is no longer generic when $N>2$, and indeed, some quite reasonable multi-marginal problems only have solutions of larger dimension \cite[Remark~2.13]{Pass.15}. On the other hand, we do expect that our results extend to $N>2$ for particular costs like those in \cite{GangboSwiech.1998}.
In any event, \cref{thm:second} separates such regularity issues from the convergence analysis, so that any available regularity result from optimal transport theory can be applied directly.

\begin{remark}%
  As mentioned in the Introduction, \cite{CarlierPegonTamanini.22} previously obtained the constant $d/2$, for compactly supported marginals with uniformly bounded Lebesgue densities, and also showed its sharpness (cf.\ \cref{se:sharpness}). Unlike in our result, the upper bound in~\cite{CarlierPegonTamanini.22} does not require nondegeneracy. Interestingly, Minty's trick is also used in \cite{CarlierPegonTamanini.22}, but it is employed in the proof of the sharpness rather than in the upper bound as in the present work. Above, we worked on the primal problem and used Minty's trick to estimate the dimension of optimal couplings. Whereas in \cite{CarlierPegonTamanini.22}, the authors work on the Kantorovich potentials of the dual problem to derive the upper bound, giving a quadratic control on the integrated difference between a $\lambda$-convex function and its first-order Taylor expansion.
\end{remark}

\section{Sharpness}\label{se:sharpness}

In this section we show that the upper bounds obtained in the preceding section are sharp in certain cases. Throughout, we focus on $N=2$ marginals and divergences given by $f(x)=x\log(x)$ and $f(x) = \frac{1}{\rho} (x^\rho - 1)$.
Lower bounds for $\OT_{f, \varepsilon} -\OT$ are naturally obtained from the dual problem of $\OT_{f, \varepsilon}$. %

\begin{lemma}\label{le:dualLower}
  Let $\hat h_{i}\in L^{1}(\mu_{i})$, $i=1,2$ be Kantorovich potentials for~$\OT$ and $\hat c (x,y):=c(x,y)-\hat h_{1}(x)-\hat h_{2}(y)$ for $(x,y)\in X_{1}\times X_{2}$. Let $f^*(y) := \sup_{x \geq 0} [xy - f(x)]$ for $y\in\R$ and $f^*_\varepsilon(y) := \varepsilon f^*(\frac{1}{\varepsilon} y)$. Then
  \begin{align*}	
	  \OT_{f, \varepsilon} -\OT
	  &\geq \sup_{a \in \mathbb{R}} \left( a - \int f^*_{\eps}\left(a-\hat c\right) \,d(\mu_1 \otimes \mu_2) \right)\\
	&\geq \sup_{a \in \mathbb{R}} \left(a - f^*_{\eps}(a)\int  \eins_{a \geq \hat c} \,d(\mu_1 \otimes \mu_2)  \right) - \eps f^*(0).
  \end{align*}
\end{lemma}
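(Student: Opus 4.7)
The plan is to use weak duality for $\OT_{f,\eps}$ together with a specific dual candidate built from the Kantorovich potentials of $\OT$. Specifically, the Fenchel--Rockafellar dual of the $f$-divergence regularized transport problem gives the weak-duality bound
\begin{align*}
  \OT_{f,\eps} \geq \sup_{h_1, h_2} \left\{ \int h_1 \, d\mu_1 + \int h_2 \, d\mu_2 - \int f^*_\eps(h_1 + h_2 - c) \, d(\mu_1 \otimes \mu_2) \right\},
\end{align*}
where the supremum runs over $h_i \in L^1(\mu_i)$. I would derive this standard inequality by relaxing the marginal constraints with multipliers $h_1, h_2$ and solving the inner unconstrained problem over densities $g = d\pi/dP$ pointwise; by definition of the Legendre transform this inner infimum equals $-f^*_\eps(h_1 + h_2 - c)$.

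Next, I plug in the specific candidate $h_1 := \hat h_1 + a$ and $h_2 := \hat h_2$ for an arbitrary $a \in \R$. Since the Kantorovich potentials satisfy $\OT = \int \hat h_1 \, d\mu_1 + \int \hat h_2 \, d\mu_2$ and by construction $h_1 + h_2 - c = a - \hat c$, subtracting $\OT$ from both sides of the dual inequality and taking the supremum over $a \in \R$ immediately yields the first inequality of the lemma.

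For the second inequality, I would split the integral of $f^*_\eps(a - \hat c)$ according to the sign of $a - \hat c$, using three elementary facts: (a) $\hat c \geq 0$ by dual feasibility $\hat h_1 + \hat h_2 \leq c$; (b) $f^*$ is nondecreasing, because $f$ is defined on $\R_+$, so that $x y_1 - f(x) \leq x y_2 - f(x)$ for every $x \geq 0$ and $y_1 \leq y_2$; (c) $f^*(0) = \sup_{x \geq 0}[-f(x)] \geq -f(1) = 0$. On $\{a \geq \hat c\}$ one has $0 \leq a - \hat c \leq a$, so monotonicity gives $f^*_\eps(a - \hat c) \leq f^*_\eps(a)$; on $\{a < \hat c\}$ one has $a - \hat c < 0$, so $f^*_\eps(a - \hat c) \leq f^*_\eps(0) = \eps f^*(0)$. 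Integrating against the probability $\mu_1 \otimes \mu_2$ and discarding the factor $P(a < \hat c) \leq 1$ (which is allowed since $f^*(0) \geq 0$) yields
\begin{align*}
  \int f^*_\eps(a - \hat c) \, d(\mu_1 \otimes \mu_2) \leq f^*_\eps(a) \int \1_{a \geq \hat c} \, d(\mu_1 \otimes \mu_2) + \eps f^*(0),
\end{align*}
which combined with the first inequality yields the second claim.

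The only genuine issue is the dual bound in the first step; it is a standard Fenchel--Rockafellar computation for $f$-divergence regularized transport, but one should verify integrability of the various pieces and measurable selection, which is routine once $\hat h_i \in L^1(\mu_i)$ and the growth conditions on $c$ are in hand.
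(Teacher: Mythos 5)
Your proof is correct and follows essentially the same route as the paper: the paper cites the duality $\OT_{f,\varepsilon}=\sup_{h_1,h_2}\int h_1+h_2-f^*_\varepsilon(h_1+h_2-c)\,d(\mu_1\otimes\mu_2)$, plugs in $h_1=\hat h_1+a$, $h_2=\hat h_2$, and bounds the integral using that $f^*_\varepsilon$ is nondecreasing, $\hat c\geq 0$ and $f^*_\varepsilon(0)=\varepsilon f^*(0)\geq 0$, exactly as you do. The only (harmless) difference is that you sketch the weak-duality inequality via the Fenchel--Young argument instead of citing it, which indeed suffices since only the $\geq$ direction is used.
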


\begin{proof}
  Recall (e.g., \cite{EcksteinPammer.22, TerjekGonzalez.22}) the duality 
		\begin{align*}
	\OT_{f, \varepsilon} &= \sup_{h_1, h_2} \int h_1(x)+h_2(y)-f_{\varepsilon}^*\big(h_1(x)+h_2(y)-c(x, y)\big) \,\mu_1(dx)\mu_2(dy)
	\end{align*}
	where the supremum ranges over $h_{i}\in L^{1}(\mu_{i})$. As $\OT=\sum_{i=1}^{2} \int \hat h_{i}\,d\mu_{i}$, choosing  $h_{1}=\hat h_{1} + a$ and $h_{2}=\hat h_{2}$ yields
	\begin{align*}
	\OT_{f, \varepsilon} -\OT
	&\geq \sup_{a \in \mathbb{R}} \left( a - \int f_\varepsilon^*(a-\hat c) \,d(\mu_1 \otimes \mu_2) \right).
  \end{align*}
  As $f_{\eps}^*$ is nondecreasing,
	$\hat c \geq 0$ and $f_{\eps}^*(0)=\eps f^*(0) \geq -\eps f(1)=0$, we also have
	$
	\int f_\varepsilon^*(a-\hat c) \,d(\mu_1 \otimes \mu_2) \leq f_{\varepsilon}^*(a)\int  \eins_{a \geq \hat c} \,d(\mu_1 \otimes \mu_2) + f_\varepsilon^*(0)
  $, leading to the second inequality.
\end{proof}

Turning to the sharpness of the Lipschitz result (\cref{thm:lip}), it was observed in \cite[Example~3.3]{CarlierPegonTamanini.22} that the leading-order term $\eps\log(1/\eps)$ is sharp in the entropic case for the distance cost on~$\R$. Part~(i) below is a simple extension of that result to $d$ dimensions equipped with the $L^{1}$-metric as cost, showing that the dependence on the dimension (or equivalently the quantization rate) is also sharp. For $L^\rho$ regularization, we show in~(ii) that the leading term has the sharp order and in particular the correct dimension dependence. %
Regarding the relation between dimension and quantization rate, recall from~\cref{rk:quantizationRate} that $\alpha_{2}=1/d$ for absolutely continuous marginal~$\mu_{2}\in\cP(\R^{d})$.

\begin{proposition}[Sharpness of \cref{thm:lip}]\label{pr:lipschitzSharp}
	Let $X_1 = X_2 = \mathbb{R}^d$ with $\mu_1 = \mu_2$ the uniform distribution on $[0, 1]^d$ and $c(x, y) = \sum_{i=1}^{d} |x_{i}-y_{i}|$.
	\begin{enumerate}
	\item Let $f(x)=x \log(x)$. Then for all $\eps>0$,
	  \begin{align*}	
	  \OT_{f, \varepsilon} - \OT 
	  &\geq d \eps \log(1/\eps) - (2^{d}-1)\eps.
  \end{align*}
  In particular, the leading term matches the bound in \cref{thm:lip}\,(i).
  \item Let $f(x) = \frac{1}{\rho} (x^\rho - 1)$ for some $\rho > 1$. %
  Then
  \[
	  \OT_{f, \varepsilon} - \OT \geq K \varepsilon^{\frac{1}{(\rho-1)d+1}} + O(\eps)
	\]
	for a constant $K>0$. In particular, the leading term has the same exponent as the bound deduced from  \cref{thm:lip}\,(ii) in \cref{ex:explicitBounds}.
\end{enumerate}  
\end{proposition}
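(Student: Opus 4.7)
My plan is to apply the dual lower bound of \cref{le:dualLower} after making two simplifications that use the specific structure of the example. First, since $\mu_1=\mu_2$ and $c\geq 0$ with $c(x,x)=0$, the identity map transports $\mu_1$ to $\mu_2$ at zero cost, so $\OT=0$ and one may take $\hat h_1\equiv \hat h_2\equiv 0$ as Kantorovich potentials; hence $\hat c=c$. Second, if $X,Y$ are independent uniforms on $[0,1]^d$ then the density of $X-Y$ is bounded by $1$ on $[-1,1]^d$, so
\begin{equation*}
(\mu_1\otimes\mu_2)(c\leq a)\ \leq\ \mathrm{Vol}\{u\in\R^d:\|u\|_1\leq a\}\ =\ \frac{(2a)^d}{d!},\qquad a\in[0,1].
\end{equation*}

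For part (i), I will insert $f^*(y)=e^{y-1}$ into the \emph{first} inequality of \cref{le:dualLower} and exploit the multiplicative separation $f^*_\eps(a-c)=\eps e^{-1}e^{a/\eps}e^{-c/\eps}$. Setting $Z:=\int e^{-c/\eps}\,d(\mu_1\otimes\mu_2)$, the concave function $a\mapsto a-\eps e^{a/\eps-1}Z$ is maximized at $a^{\ast}=\eps(1-\log Z)$ with value $-\eps\log Z$. Because $c$ is a sum of coordinatewise distances, $Z$ tensorizes: $Z=z^d$ with $z=\int_{[0,1]^2}e^{-|x-y|/\eps}\,dx\,dy$. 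An elementary integration gives $z=2\eps-2\eps^2+2\eps^2 e^{-1/\eps}\leq 2\eps$, so $-\eps\log Z\geq d\eps\log(1/\eps)-d\eps\log 2$. The claim then follows from the inequality $d\log 2=\log(2^d)\leq 2^d-1$ (from $\log(1+x)\leq x$).

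For part (ii), the conjugate is $f^*(y)=\tfrac{y_+^{\rho'}}{\rho'}+\tfrac{1}{\rho}$ with $\rho'=\rho/(\rho-1)$, so $f^*_\eps(y)=\tfrac{y_+^{\rho'}}{\rho'\eps^{\rho'-1}}+\tfrac{\eps}{\rho}$. In the first inequality of \cref{le:dualLower} I will apply the crude but effective bound $(a-c)_+^{\rho'}\leq a^{\rho'}\mathbf{1}_{c\leq a}$ (valid since $c\geq 0$), combine it with the probability bound above, and obtain, for $a\in[0,1]$,
\begin{equation*}
\OT_{f,\eps}-\OT\ \geq\ a\;-\;\frac{2^d\, a^{\rho'+d}}{\rho'\,d!\,\eps^{\rho'-1}}\;-\;\frac{\eps}{\rho}.
\end{equation*}
This is a one-variable optimization: the critical point satisfies $(a^{\ast})^{\rho'+d-1}=\frac{\rho'\,d!}{2^d(\rho'+d)}\eps^{\rho'-1}$, giving $a^{\ast}=C_0\,\eps^{(\rho'-1)/(\rho'+d-1)}$ with an explicit $C_0>0$ and value $\tfrac{\rho'+d-1}{\rho'+d}a^{\ast}$. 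Using the identities $\rho'-1=1/(\rho-1)$ and $\rho'+d-1=(1+(\rho-1)d)/(\rho-1)$, the exponent collapses to exactly $1/((\rho-1)d+1)$. Since this exponent lies in $(0,1)$, the leading term dominates the $-\eps/\rho$ correction, giving the claim; for $\eps$ small, $a^{\ast}\in[0,1]$ so the probability bound applies.

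No step is genuinely hard; the arguments reduce to a single-variable optimization after the setup. The only delicate choice is to \emph{not} use the second (cruder) inequality of \cref{le:dualLower} in part (i): the precise constant $d$ in front of $\eps\log(1/\eps)$ requires the full computation of the partition function $Z$ rather than an indicator bound. In part (ii) the indicator bound is sharp enough because the leading exponent is strictly less than one.
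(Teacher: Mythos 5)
Your proposal is correct and takes essentially the same route as the paper: both parts invoke \cref{le:dualLower} with $\hat h_1=\hat h_2=0$, exploit the product structure of $c$ (the Laplace-type integral $\le(2\eps)^d$ in (i), an indicator/small-ball bound in (ii), where your estimate $(a-c)_+^{\rho'}\le a^{\rho'}\mathbf{1}_{c\le a}$ is exactly how the lemma's second inequality is obtained), and then optimize the one-variable problem in the shift $a$. Your minor deviations---optimizing $a$ exactly rather than plugging in a specific choice, the $d!$ volume factor, and using the conjugate $e^{y-1}$ of $x\log x$ where the paper works with the equivalent normalization $e^{y}-1$---only sharpen the constants and do not change the argument.
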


\begin{proof}
  (i) Here $f^*(x)=e^{x}-1$. Recalling the normalizing constant $\int_{\R} e^{\frac{|u-v|}{\eps}}\,du =2\eps$ of the Laplace distribution, 
	\begin{align*}	
	  \int e^{\frac{a-c}{\eps}} \,d(\mu_1 \otimes \mu_2) = e^{\frac{a}{\eps}} \prod_{i=1}^{d} \int_{[0,1]^{2}} e^{\frac{|x_{i}-y_{i}|}{\eps}}\,dx_{i}dy_{i} \leq e^{a/\eps}(2\eps)^{d},
	\end{align*}
  and thus \cref{le:dualLower} (with $\hat h_{1} = \hat h_{2}=0$) shows
  \begin{align*}	
	  \OT_{f, \varepsilon} - \OT
	  &\geq \sup_{a} \left(a - 2^{d}\eps^{d+1} e^{a/\eps} + \eps \right).
  \end{align*}
  Choosing $a=d \eps \log(1/\eps)$, the right-hand side equals $d \eps \log(1/\eps) - (2^{d}-1)\eps$.

  \vspace{.3em}
  \noindent (ii) Here $f^*(y) = \frac{1}{q} y_+^q + \frac{1}{\rho}$ for $q := \frac{\rho}{\rho-1}$, so that
	\[
	   f^*_\varepsilon(a)=\eps f^*(a/\eps) = \frac{a^q}{q\eps^{q-1}} + \frac{\eps}{\rho}, \quad a\geq0.
	\]
	The definition of~$c$ shows that $\eins_{a \geq c}\leq\prod_{i=1}^{d} \eins_{\{|x_{i}-y_{i}|\leq a\}}$ and thus 
	\[
	  \int \eins_{a \geq c} \,d(\mu_1 \otimes \mu_2) \leq \prod_{i=1}^{d} \int_0^1\int_0^1 \eins_{a \geq |x_i-y_i|} \,dx_i dy_i = (2a-a^{2})^{d} \leq (2a)^{d}
	\]
	for $a\in[0,1]$, with the last bound valid for $a\geq0$. \Cref{le:dualLower} thus yields 
	\begin{align}\label{eq:lowerPowerBound}
	\OT_{f, \varepsilon} - \OT
	&\geq \sup_{a \in\R_{+}} \left(a- 2^df_\varepsilon^*(a) a^d - \varepsilon f^*(0)\right) \\
	&=\sup_{a \in\R_{+}} \left(a- 2^d \frac{a^{d+q}}{q\eps^{q-1}} -\frac{2^d \eps a^d}{\rho} - \varepsilon\right). \nonumber
	\end{align}
	Setting $a := k \varepsilon^{\frac{1}{(\rho-1)d+1}}$, where $k > 0$ is such that  $K:=(k-2^d k^{q+d}/q)>0$, we deduce
	$
	\OT_{f, \varepsilon} -\OT
	\geq  K \varepsilon^{\frac{1}{(p-1)d+1}} +O(\eps)
	$ as claimed.
\end{proof}

\begin{remark}\label{rk:quadraticSharp}
  We can similarly show the sharpness of \cref{co:quadratic}\,(ii) for quadratic cost. Namely, let $c(x, y) = |x-y|^{2}=\sum_{i=1}^{d} |x_{i}-y_{i}|^{2}$. Going through the proof of 
  \cref{pr:lipschitzSharp}, we now have $\eins_{a \geq c}\leq\prod_{i=1}^{d} \eins_{\{|x_{i}-y_{i}|\leq \sqrt a\}}$, and thus
  $
	\OT_{f, \varepsilon} -\OT
	\geq  K \varepsilon^{\frac{1}{(p-1)d/2+1}} +O(\eps).
	$
	A more general (if much more involved) argument for a general class of marginals is given below.
\end{remark} 	

Indeed, we can establish the sharpness of \cref{thm:second} for a general class of marginals and costs. For the entropic case, it is well known that the leading term $\frac{d}{2} \varepsilon \log\left(\frac{1}{\varepsilon}\right)$ is sharp for quadratic cost $c(x,y)=|x-y|^{2}$ on $\R^{d}\times\R^{d}$ when the marginals are sufficiently regular \cite{Chizat2020Faster,ConfortiTamanini.19,Pal.19}. Very recently, \cite{CarlierPegonTamanini.22} showed that this term is sharp for the broad class of nondegenerate (as defined before \cref{le:nondegenerateCompact}) costs and regular marginals; their result is stated in~(i) below for completeness. The core of the proof in~\cite{CarlierPegonTamanini.22} is a quadratic detachment estimate for the Kantorovich potentials. In~(ii), we apply their technique to divergences $f(x) = \frac{1}{\rho} (x^\rho - 1)$ to show sharpness of the leading order in \cref{thm:second}\,(ii).

\begin{proposition}[Sharpness of \cref{thm:second}]\label{pr:secondSharp}
  For $i=1, 2$, let $X_i \subset \mathbb{R}^{d}$ be convex and compact and let $\mu_i \in \mathcal{P}(X_i)$ have bounded Lebesgue density. Let $c\in \cC^{2}(X)$ be nondegenerate.
	\begin{enumerate}
	\item Let $f(x)=x \log(x)$. Then
	  \begin{align*}	
	  \OT_{f, \varepsilon} - \OT 
	  &\geq \frac{d}{2} \eps \log(1/\eps) + O(\eps).
  \end{align*}
  In particular, the leading term matches the bound in \cref{thm:second}\,(i).
  \item Let $f(x) = \frac{1}{\rho} (x^\rho - 1)$ for some $\rho > 1$. Then
  \[
	  \OT_{f, \varepsilon} - \OT \geq K \varepsilon^{\frac{1}{(\rho-1)d/2+1}} + O(\eps)
	\]
	for a constant $K>0$. In particular, the leading term has the same exponent as the bound deduced from \cref{thm:second}\,(ii) in \cref{ex:explicitBoundSecond}.
\end{enumerate}  
\end{proposition}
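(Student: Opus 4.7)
The plan is to derive both lower bounds from Lemma~\ref{le:dualLower} combined with the quadratic detachment estimate \cite[Lemma~4.2]{CarlierPegonTamanini.22}. Under our standing hypotheses (compact convex supports, bounded Lebesgue densities, nondegenerate $C^{2}$ cost), that lemma produces constants $C_0, a_0 > 0$ and Kantorovich potentials $\hat h_1, \hat h_2$ such that the residual $\hat c := c - \hat h_1 - \hat h_2$ is nonnegative and
\[
(\mu_1 \otimes \mu_2)\{\hat c \leq a\} \;\leq\; C_0\, a^{d/2}, \qquad a \in [0, a_0].
\]
This is the only nontrivial analytic input; it quantifies how slowly $\hat c$ detaches from its zero set (the graph of the optimal transport).

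For part~(i), this is exactly the sharpness half of \cite{CarlierPegonTamanini.22}. Sketching it for context: with entropic $f^*_\eps(a) = \eps(e^{a/\eps}-1)$, one feeds the detachment estimate into the first inequality of Lemma~\ref{le:dualLower}. A layer-cake decomposition combined with a standard incomplete-Gamma bound produces
$\int e^{-\hat c/\eps}\,d(\mu_1\otimes\mu_2) \leq C_1 \eps^{d/2}$, and then the choice $a = \tfrac{d}{2}\eps\log(1/\eps)+c\eps$ (optimizing $c$) gives the $\tfrac{d}{2}\eps\log(1/\eps) + O(\eps)$ lower bound. I would simply cite \cite[Proposition~4.1]{CarlierPegonTamanini.22} and not reprove it.

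For part~(ii) I would instead plug the detachment estimate into the \emph{second} bound in Lemma~\ref{le:dualLower}. The Legendre transform of $f(x)=\tfrac{1}{\rho}(x^\rho-1)$ is $f^*(y) = \tfrac{1}{q}\,y_+^{q} + \tfrac{1}{\rho}$ with $q = \tfrac{\rho}{\rho-1}$, hence $f^*_\eps(a) = \tfrac{a_+^{q}}{q\eps^{q-1}} + \tfrac{\eps}{\rho}$. For $a \in [0, a_0]$ this yields
\[
\OT_{f,\eps}-\OT \;\geq\; a \;-\; \frac{C_0\,a^{q+d/2}}{q\,\eps^{q-1}} \;-\; \frac{C_0\,\eps\,a^{d/2}}{\rho} \;-\; \frac{\eps}{\rho}.
\]
The first two terms balance at $a = k\,\eps^{\gamma}$ with
\[
\gamma \;=\; \frac{q-1}{q-1+d/2} \;=\; \frac{1}{1+(\rho-1)d/2},
\]
which is precisely the exponent from \cref{ex:explicitBoundSecond}. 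Picking $k > 0$ small enough that $K := k - C_0\,k^{q+d/2}/q > 0$, the first two terms combine to $K\eps^{\gamma}$, while the third is of order $\eps^{1+\gamma d/2}$ and the last is $\eps/\rho$; since $\gamma < 1$ and $1+\gamma d/2 > 1$, both are $o(\eps^{\gamma})$ and in particular absorbed into the $O(\eps)$ error.

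The main obstacle is really the quadratic detachment estimate itself, but it is supplied verbatim by \cite[Lemma~4.2]{CarlierPegonTamanini.22}, and matching its hypotheses to ours is immediate. Everything else amounts to the standard Legendre-transform bookkeeping already performed in \cref{pr:lipschitzSharp}\,(ii) and \cref{rk:quadraticSharp}, with the essential change that the marginal volume bound is now $a^{d/2}$ rather than $a^{d}$, giving the improved exponent.
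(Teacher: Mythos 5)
Your proposal follows essentially the same route as the paper: both parts rest on \cref{le:dualLower} combined with the quadratic detachment input from \cite{CarlierPegonTamanini.22} (part~(i) simply cited, part~(ii) obtained by exactly the balancing $a \sim \eps^{1/((\rho-1)d/2+1)}$ already used in \cref{pr:lipschitzSharp}\,(ii)), and your exponent computation is correct. The one presentational caveat is that the paper does not take the bound $(\mu_1\otimes\mu_2)\{\hat c\leq a\}\leq C a^{d/2}$ verbatim from \cite[Lemma~4.2]{CarlierPegonTamanini.22}, but derives it by a short finite-cover argument following the proof of their Proposition~4.4, where compactness and the bounded Lebesgue densities are what turn the pointwise quadratic detachment into the measure bound---a step you should spell out rather than attribute entirely to the cited lemma.
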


\begin{proof}
  See \cite[Proposition~4.4]{CarlierPegonTamanini.22} for~(i). To show (ii), we argue that there exist constants $C_{0}, C > 0$ such that
\begin{equation}\label{eq:lbmain}
\OT_{f, \varepsilon} \geq \OT + \sup_{a \leq C_{0}}\left(a - C f_{\varepsilon}^*(a) a^{d/2} - \max\{0, f_{\varepsilon}^*(0)\}\right).
\end{equation}
  This bound is similar to~\eqref{eq:lowerPowerBound} but with different constants, and implies the claim along the same lines. To show~\eqref{eq:lbmain}, we will apply \cref{le:dualLower} with optimal potentials $(\hat h_1, \hat h_2)$. The latter can be chosen to be continuous, so that $\hat c$ is also continuous. The main difficulty is to bound $\int  \eins_{a \geq \hat c} \,d(\mu_1 \otimes \mu_2)$.
  Following the proof of \cite[Proposition~4.4]{CarlierPegonTamanini.22}, we find a finite open cover $A = \cup_{i=1}^n A_i$ of the compact set $\{\hat{c} = 0\} \cap (X_1 \times X_2)$
  satisfying the following:
	\begin{itemize}
		\item[(a)] On the compact $B:= (X_1 \times X_2) \backslash A$ we have $\hat{c} > C_{0}$ for some $C_{0} > 0$.
		\item[(b)] There exist $r,C_1 > 0$ such that for all $i \in \{1, \dots, n\}$, for some $r_v\in\R^{d}$ depending only on $v \in \mathbb{R}^d$,
		\[
		\int_{A_i} \eins_{a \geq \hat{c}} \,d(\mu_1 \otimes \mu_2) \leq C_1 \int_{B_r} \int_{B_r} \eins_{a \geq |u - r_v|^2/4} \,du dv,
		\]
		where $B_r \subset \mathbb{R}^d$ is the ball of radius $r > 0$ around the origin.
	\end{itemize}
	Bounding the inner integral in~(b) according to
		\begin{align*}
		\int_{B_r} \eins_{a \geq |u - r_v|^2/4} \,du &\leq \int_{\mathbb{R}^d} \eins_{a \geq |u|^2/4} \,du 
		\leq \prod_{i=1}^d \int_{\mathbb{R}} \eins_{|u_i| \leq 2 \sqrt{a}} \,du \leq 4^d a^{d/2},
		\end{align*}
		we obtain 
		\[
		\int_{A\cap (X_1 \times X_2)} \eins_{a \geq \hat{c}} \,d(\mu_1 \otimes \mu_2) \leq C a^{d/2}
		\]
		for a constant~$C>0$. In view of~(a), this shows
		\begin{equation}\label{eq:bounda}
		  \int_{X_1 \times X_2} \eins_{a \geq \hat{c}} \,d(\mu_1 \otimes \mu_2) \leq C a^{d/2} \quad \mbox{for}\quad a\leq C_{0}
		\end{equation}
		and now~\eqref{eq:lbmain} follows by \cref{le:dualLower}.
\end{proof}

\appendix
\section{Appendix}\label{se:appendix}

The following is well known in the entropic case \cite[Section~5]{Nutz.20}. For completeness, we provide an extension to the $f$-divergences under consideration.

\begin{proposition}\label{pr:finiteDivergenceOT}
  We have $\OT_{f, \varepsilon} - \OT = O(\eps)$ if and only if there exists an optimal transport plan $\pi^{*}$ for~$\OT$ with $D_{f}(\pi^{*},P)<\infty$.
\end{proposition}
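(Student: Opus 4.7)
The plan is to treat the two directions separately, with the ``if'' direction being essentially immediate and the ``only if'' direction relying on a weak-limit extraction from near-optimizers for $\OT_{f,\eps}$.

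For the ``if'' direction, suppose $\pi^{*}$ is an optimizer of $\OT$ with $D_f(\pi^{*},P)<\infty$. Then $\pi^{*}$ is admissible for $\OT_{f,\eps}$, so $\OT_{f,\eps}\leq \int c\,d\pi^{*}+\eps D_f(\pi^{*},P)=\OT+\eps D_f(\pi^{*},P)$. Since $f$ is convex with $f(1)=0$, Jensen's inequality applied to $D_f(\cdot,P)$ gives $D_f\geq 0$, hence also $\OT_{f,\eps}\geq \OT$. Combining these yields $\OT_{f,\eps}-\OT=O(\eps)$.

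For the ``only if'' direction, suppose $\OT_{f,\eps}-\OT\leq M\eps$ for $\eps$ small. For each such $\eps$, choose $\pi_{\eps}\in\Pi(\mu_{1},\dots,\mu_{N})$ with $\int c\,d\pi_{\eps}+\eps D_f(\pi_{\eps},P)\leq \OT_{f,\eps}+\eps$. Since $\int c\,d\pi_{\eps}\geq \OT$, rearranging gives $D_f(\pi_{\eps},P)\leq M+1$. The family $\{\pi_{\eps}\}$ has fixed marginals, hence is tight by Prokhorov, so along a subsequence $\eps_{n}\to 0$ we have $\pi_{\eps_{n}}\to\pi^{*}$ weakly for some $\pi^{*}\in\Pi(\mu_{1},\dots,\mu_{N})$.

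The main technical point is upgrading weak convergence to $W_{p}$-convergence in order to pass to the limit in $\int c\,d\pi_{\eps_{n}}$. Here the key observation is that $\int d_{X,p}(x,\hat x)^{p}\,d\pi = \sum_{i=1}^{N}\int d_{X_{i}}(x_{i},\hat x_{i})^{p}\,d\mu_{i}$ depends only on the marginals. Thus the $p$-th moments of $\pi_{\eps_{n}}$ are constant in~$n$ and equal the $p$-th moment of $\pi^{*}$, so weak convergence together with moment convergence gives $W_{p}(\pi_{\eps_{n}},\pi^{*})\to 0$. Since $c$ is continuous with growth of order~$p$, the standard continuity of $\pi\mapsto\int c\,d\pi$ along $W_{p}$-convergent sequences (e.g.\ \cite[Definition 6.8]{Villani.09}) yields $\int c\,d\pi_{\eps_{n}}\to\int c\,d\pi^{*}$. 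Combined with $\int c\,d\pi_{\eps_{n}}\leq \OT+(M+1)\eps_{n}$, this gives $\int c\,d\pi^{*}\leq \OT$, and since $\pi^{*}\in\Pi$ the reverse also holds, so $\pi^{*}$ is optimal for~$\OT$.

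Finally, by lower semicontinuity of $D_f(\cdot,P)$ with respect to weak convergence (which follows from convexity and lower boundedness of $f$ via the dual representation of $f$-divergences), we obtain
\begin{equation*}
D_f(\pi^{*},P)\leq \liminf_{n\to\infty} D_f(\pi_{\eps_{n}},P)\leq M+1<\infty,
\end{equation*}
completing the proof. The hard part is the $W_{p}$-upgrade of the weak convergence, which is where the product-space structure of the marginals is crucial; the remaining steps are standard weak-compactness and lower-semicontinuity arguments.
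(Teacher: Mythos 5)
Your proof is correct, but it takes a genuinely different route from the paper's. The paper works on the densities: since $f$ is superlinear, the uniform bound on $D_f(\pi_\eps,P)$ makes $d\pi_\eps/dP$ uniformly integrable, so a subsequence converges weakly in $L^1(P)$ (i.e.\ the $\pi_{\eps_n}$ converge setwise), and a generalized Fatou lemma is then invoked twice, once for the cost (using the growth condition) to get $\int c\,d\pi_0\leq\liminf \OT_{f,\eps_n}=\OT$ and once for the divergence. You instead extract a narrow limit by Prokhorov (fixed marginals give tightness) and upgrade weak to $W_p$-convergence via the observation that $\int d_{X,p}(x,\hat x)^p\,d\pi=\sum_i\int d_{X_i}(x_i,\hat x_i)^p\,d\mu_i$ is determined by the marginals, hence constant along the sequence and equal to the limit's moment; this is a nice use of the specific product metric $d_{X,p}$ fixed in the paper and yields genuine convergence of $\int c\,d\pi_{\eps_n}$ (not just a liminf inequality), after which you conclude with the standard narrow lower semicontinuity of $D_f(\cdot,P)$. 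What each approach buys: yours avoids the weak-$L^1$ compactness and generalized Fatou machinery and is arguably more self-contained given the paper's metric conventions, while the paper's argument does not use the metric structure at all beyond the growth bound and automatically produces a limit that is absolutely continuous with respect to $P$. Two small points worth tightening in your write-up: the lower semicontinuity of $D_f(\cdot,P)$ under weak convergence (with the convention $D_f=\infty$ off absolute continuity) really rests on the superlinearity and lower semicontinuity of $f$, both available in the paper's standing assumptions, rather than on lower boundedness per se; and using near-optimizers $\pi_\eps$ is fine (indeed it sidesteps the existence of minimizers that the paper's proof implicitly assumes).
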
 

\begin{proof}
  If there exists an optimal transport plan~$\pi^{*}$ with finite divergence, clearly $\OT_{f, \varepsilon} - \OT \leq \eps D_{f}(\pi^{*},P)=O(\eps)$. Conversely, let $\pi_{\eps}$ be an optimizer of $\OT_{f, \varepsilon}$. 
  If $\OT_{f, \varepsilon} - \OT = O(\eps)$, it follows that $\sup_{\eps\in (0,1]}D_{f}(\pi_{\eps},P)<\infty$. As~$f$ has superlinear growth, the densities $d\pi_{\eps}/dP$ are then uniformly integrable; in particular, there exists a weak$^{*}$-convergent sequence $d\pi_{\eps_{n}}/dP$, meaning that $(\pi_{\eps_{n}})$ converge set-wise. The limit $\pi_{0}$ is again a coupling. We have $\int c\, d\pi_{0}\leq \liminf \OT_{f, \varepsilon_{n}}=\OT$ by a generalized Fatou's lemma~\cite[p.\,231]{Royden.68} and the growth condition on~$c$, showing that  $\pi_{0}$ is an optimal transport. 
  The same Fatou's lemma  shows $D_{f}(\pi_{0},P)\leq \liminf D_{f}(\pi_{\eps_{n}},P)<\infty$, completing the proof.
\end{proof} 

The following extension of~\cref{thm:lip} was prompted by a question of by G.~Carlier; see also the similar~\cite[Remark~3.2]{CarlierPegonTamanini.22}.

\begin{remark}[Extension of~\cref{thm:lip} beyond Lipschitz]\label{rk:modOfCont}
  Fix $p=1$ and replace~\eqref{eq:ccond} by 
\begin{align}\label{eq:modOfCont}
  \left|\int c \,d(\pi - \tilde{\pi})\right| \leq \omega(W_1(\pi, \tilde{\pi})),
\end{align}
  where $\omega : \mathbb{R}_+ \rightarrow \mathbb{R}_+$ is an increasing and concave modulus of continuity.
To motivate this, note that if the function~$c$ itself has modulus of continuity~$\omega$, then choosing $\theta\in\Pi(\pi,\tilde{\pi})$ attaining $W_1(\pi, \tilde{\pi})$ yields
\begin{align*}
\left|\int c \,d(\pi - \tilde{\pi})\right| 
&\leq \int |c(x)-c(y)| \,\theta(dx, dy) \\
&\leq \int \omega(d_{X,1}(x,y)) \,\theta(dx, dy) \leq \omega(W_1(\pi, \tilde{\pi}))
\end{align*} 
by Jensen's inequality.
Going through the proof of~\cref{thm:lip} with~\eqref{eq:modOfCont}, we obtain instead of~\eqref{eq:proofLip1} that
	\begin{align*}\label{eq:proofModOfCont}
	  \OT_{f, \varepsilon} - \OT \leq 2 \omega\left(C \sum_{i=2}^N n_{i}^{-\alpha_{i}}\right) + \eps D_f(\tilde{\pi}, \mu_1 \otimes \mu_2^{n_2} \otimes \dots \mu_N^{n_N})
	\end{align*} 
  and can then optimize the choice of~$n_{i}$. For instance, in the entropic case, we would take $S_\varepsilon = \omega^{-1}(1/\varepsilon)$; then the first term is again of order $\varepsilon$ while the divergence term is of order $\varepsilon\log(\omega^{-1}(1/\varepsilon))$. For $N=2$ and $c(x,y)=d_{X,1}(x,y)^{r}$ with $0<r<1$, we end up with 
  \[
	\OT_{f, \varepsilon} - \OT \leq \frac{1}{r\alpha_2} \varepsilon \log\left(\frac{1}{\varepsilon}\right) + K\varepsilon.
	\]
	It is worth noting the formal similarity with \cref{thm:second}\,(i) which would correspond to~$r=2$.
\end{remark}

\bibliography{stochfin}
\bibliographystyle{abbrv}

\end{document}